\documentclass[reqno, english]{amsart}
\usepackage{float}
\usepackage[utf8]{inputenc}
\usepackage{comment}
\usepackage{capt-of}
\usepackage{fullpage}
\usepackage[usenames,dvipsnames,svgnames,table]{xcolor}
\usepackage[colorlinks=true, pdfstartview=FitV, linkcolor=blue, citecolor=blue, urlcolor=darkblue]{hyperref}

\usepackage[maxbibnames=15,doi=false,isbn=false,url=false]{biblatex}
\AtEveryBibitem{
	\clearlist{language}
}

\usepackage[margin=1in]{geometry}                
\usepackage{graphicx}
\usepackage{amssymb}
\usepackage{amsmath}
\usepackage{amsfonts}
\usepackage{mathrsfs}
\usepackage{epstopdf}
\usepackage{lscape}
\usepackage{array}
\usepackage[utf8]{inputenc}
\usepackage{tikz,caption}
\usetikzlibrary{arrows.meta}
\usepackage{thmtools}
\usepackage{cleveref}

\usepackage{enumitem}
\setlist[itemize]{leftmargin=2em}
\setlist[enumerate]{leftmargin=2em}

\usepackage{booktabs}
\usepackage{multirow}
\usepackage{mathtools}
\usepackage[linesnumbered,ruled]{algorithm2e}
\usepackage{mathtools}
\usepackage{soul}
\usepackage{upgreek}

\newtheorem{theorem}{Theorem}[section]
\newtheorem{lemma}[theorem]{Lemma}
\newtheorem{corollary}[theorem]{Corollary}

\theoremstyle{definition}
\newtheorem{definition}[theorem]{Definition}

\newtheorem{example}[theorem]{Example}

\newtheorem{remark}[theorem]{Remark}

\numberwithin{theorem}{section}

\numberwithin{equation}{section}

\newcolumntype{C}[1]{>{\centering\arraybackslash}m{#1}}

\newcommand{\mb}[1]{\mathbf{#1}}

\newcommand{\grapheq}[2][1]{%
	\begin{tikzpicture}[baseline=0, vx/.style={circle, fill=black, minimum size=4pt, inner sep=0pt}, lc/.style = {circle, minimum size=6pt, inner sep=0pt, draw=black}, cat/.style ={rectangle, minimum size=10pt, inner sep=0pt, draw=black}, thick, scale=#1]
		#2
	\end{tikzpicture}%
}

\DeclarePairedDelimiter{\set}{\{}{\}}

\definecolor{darkblue}{rgb}{0.0,0,0.7} 
\definecolor{darkred}{rgb}{0.7,0,0} 
\definecolor{darkgreen}{rgb}{0, .6, 0} 

\newcommand{\defncolor}{\color{darkred}}
\newcommand{\defn}[1]{{\defncolor\emph{#1}}} 

\usepackage[colorinlistoftodos]{todonotes}

\title{Unicyclic Graphs of Arbitrary Girth with the Same Chromatic Symmetric Function}
\author[A.~Bingham]{Aram Bingham}\address[A.~Bingham]{Departamento de Matem\'aticas, Universidad de Chile, 3425 Las Palmeras, Ñuñoa, Santiago, CL}
\email{\textcolor{blue}{\href{mailto:aram@matmor.unam.mx}{aram@matmor.unam.mx}}}

\date{\today}
\subjclass[2020]{Primary 05E05, 05C60; Secondary 05C15}
\keywords{chromatic symmetric functions, unicyclic graphs}

\begin{document}
	
	\begin{abstract}
		An open question asks whether the chromatic symmetric function (CSF) of a graph distinguishes non-isomorphic trees. While it is known that the CSF does not distinguish unicyclic graphs, examples of pairs of unicyclic graphs with the same CSF and girth larger than 3 were not known until very recently. This manuscript exhibits a sequence of pairs of non-isomorphic, connected, unicyclic graphs with increasing girth and which share the same CSF. This also provides the first infinite family of bipartite graphs with the same CSF. Our main technique is to apply a version of the ``triple deletion" modular relation, due independently to Guay-Paquet and Orellana--Scott.
	\end{abstract}
	
	\maketitle
	\section{Introduction}
	The chromatic symmetric function 
	$\mathbf{X}_G$ 
	of a simple graph $G$ was introduced in \cite{stanley1995} by Stanley.  
	For a graph $G$ with vertex set $V=\{v_1, \dots, v_n\}$,  $\mathbf{X}_G$ is defined as
	\[
	\mathbf{X}_G =
	\sum_\kappa x_{\kappa(v_1)}x_{\kappa(v_2)}\dots x_{\kappa(v_n)},
	\]
	where $\kappa: V \to \mathbb{N}$ ranges over all proper colorings of the vertex set.  The ``tree isomorphism problem," often attributed to Stanley, asks whether the chromatic symmetric function (CSF) distinguishes non-isomorphic trees. That is, for trees $T$, $T'$, is it the case that
	\[\mb{X}_T= \mb{X}_{T'} \implies T\cong T'?\]
	It has been verified computationally that this is the case for all trees on up to 29 vertices \cite{HeilJi18}. Other results identify specific families of trees which are distinguished by the CSF; see, for instance, \cite{AWvW24,ALISTEPRIETO,qCats,HurynChmutov20,LoeblSereni, martin2007distinguishingtreeschromaticsymmetric,WANG}. 
	
	In order to better understand what graphical information the CSF is capable of capturing, some authors have also studied the CSF of unicyclic graphs \cite{BJLOPS, DVW17,martin2007distinguishingtreeschromaticsymmetric, ORELLANA20141,  WW23}. It has been known since work of Orellana--Scott \cite{ORELLANA20141} that there are infinitely many pairs of non-isomorphic unicyclic graphs containing triangles which have the same CSF. More recently, the author together with Johnston, Lawson, Orellana, Pan and Sato exhibited the first examples of unicyclic graphs of girth 4 and 5 with the same CSF \cite{BJLOPS}. Based on computational evidence described in that reference, triangle-free examples of non-isomorphic pairs of graphs with the same CSF appear to be much rarer, further substantiating observations made in  \cite{APCSZ21} in which the first triangle-free (but neither unicyclic, nor bipartite) examples were described.
	
	Further responding to a question raised in \cite{APCSZ21}, in this article we exhibit an infinite sequence of pairs of non-isomorphic unicyclic graphs with the same CSF. As bipartite graphs are characterized as graphs with no odd cycles, the subsequence of our pairs of graphs with even cycle size provides infinitely many pairs of non-isomorophic bipartite graphs with the same CSF. Thus, our examples indicate that the bipartite property does not confer distinguishing power upon the CSF. Of course, all trees are bipartite. 
	
	Our sequence of pairs is based on the smallest pairs of girth 4 and girth 5 unicyclic graphs reported in \cite{BJLOPS}. There is only one other pair of unicyclic graphs with girth at least 4 and with the same CSF on graphs of up to 17 vertices. It is possible that this other pair (on 13 vertices and with a 4-cycle) is the first in another infinite sequence of increasing girth pairs with the same CSF. However, as the proof of the main result of this manuscript shows, we currently lack a clear understanding of the pattern to which such pairs belong.  
	
	Nevertheless, the existence of arbitrary girth pairs of unicyclic graphs with the same CSF offers some insight into the tree isomorphism problem. Our pairs of non-isomorphic unicyclic graphs differ from one another by only two edges. Through repeated application of a modular relation on graph CSFs \cite{guaypaquet,ORELLANA20141},\footnote{This four term relation (or one specific form of it) is also sometimes also called ``triple deletion," or the ``triangular relation."} we transform the CSF of each graph in a pair into identical expressions. This \emph{modular relation} has been shown by Penagui\~ao to generate (together with isomorphism relations on graphs) the kernel of a Hopf algebra map from graphs to symmetric functions, implying that it is sufficient to determine all linear relations between CSFs of non-isomorphic graphs \cite{penaguiao}.

	On the other hand, the lengthy sequence of calculations we have found necessary to show equality of CSFs indicates the subtlety of the modular relation. If examples of pairs of non-isomorphic trees with the same CSF exist, they may also differ by a small number of edges without being easily related by the modular relation. The challenge of finding the correct sequence of applications of this relation to prove equality of CSFs further indicates the complexity of CSF computation and identity testing.
	
	Despite significant experimental and theoretical work suggesting that the CSF distinguishes trees, we interpret our examples as suggesting that the possiblity it does not is worth consideration. As trees are graphs with infinite girth, a sequence of unicyclic pairs with increasing girth and the same CSF hews as close as is possible without actually answering the tree isomorphism question negatively. On the other hand, this would make it all the more remarkable if it turns out to be the case that the CSF does distinguish trees. 
	
	Part of the attractiveness of the tree isomorphism problem lies in the well-known fact that all trees on $n$ vertices have the same \emph{chromatic polynomial}, $\chi(k)=k(k-1)^{n-1}$. It is also true that all \emph{$c$-unicyclic graphs} (those with unique cycle of order $c$) on $n$ vertices also have the same chromatic polynomial: \[\chi(k)=(k-1)^n+(-1)^c(k-1)^{n-c+1}.\] Noting that the only known pairs of non-ismorphic unicyclic graphs with a triangle occur for even $n$, and in view of the scarcity of known pairs for $c>3$, we close by remarking that it seems there may be arithmetic relations between $n$ and $c$ which determine (to some extent) the distinguishing power of the CSF among unicyclic graphs.

	
	This paper is organized as follows. In Section~\ref{sec.def} we recall a version of the modular relation and outline its application. Next, in Section~\ref{sec.const} we describe our two sequences of unicyclic graphs which form the pairs with identical CSF. Section~\ref{sec.proof} is devoted to the calculations involved in verifying that the corresponding members of each sequence of graphs give the same CSF.  
	
	\subsection*{Acknowledgments}The author is supported by FONDECYT-ANID grant 3250472.
	
	\section{Definitions and preliminaries}
	\label{sec.def}
	
	All our graphs are simple. We recall here a version of the modular relation from \cite[Corollary 3.2]{ORELLANA20141}.  For a graph $G=(V,E)$ with $v_1,v_2\in V$ and $e=\set{v_1,v_2}$, we write $G+\set{v_1,v_2}$ or $G+e$ to denote the graph with vertex set $V$ and edge set $E\cup \set{\set{v_1,v_2}}$. Similarly, write $G-\set{v_1,v_2}$ or $G-e$ for the usual edge deletion operation. Now let graph $G$ contain incident edges $e_1=\set{u,v}$ and $e_2=\set{v,w}$, with $e_3=\set{u,w}$. Then the modular relation yields
	\begin{equation}
		\label{eq:triplerelation}
		\mb{X}_G=\mb{X}_{G-e_1+e_3}+\mb{X}_{G-e_2}-\mb{X}_{G-e_1-e_2+e_3}.
	\end{equation}
	Of course, one obtains another version of the relation by interchanging $e_1$ and $e_2$.
	
	Since the computations in this article will involve identities only between CSFs of graphs, we will use the symbol $\doteq$ to denote an equality of the CSFs of the indexing graphs. That is, instead of \eqref{eq:triplerelation}, we will write equations (with appropriate parentheses) of the form 
	\begin{equation}\label{eq:dottriple}
		G\doteq {(G-e_1+e_3)}+{(G-e_2)}-{(G-e_1-e_2+e_3)}.
	\end{equation}
	
	We sometimes find it easier to depict and apply this relation visually as follows. Consider the graph $G$ with $u,v,w$ as its only vertices, and $e_1,e_2,e_3$ as above. Then \eqref{eq:dottriple} becomes 
	\begin{equation}
		\begin{tikzpicture}[auto=center,every node/.style={circle, fill=black, scale=0.45}, style=thick, scale=0.4] \label{tikz:triple1}
			\node (A1) at (0,0) {};
			\filldraw[black] (1, 2) coordinate (A2) circle (4pt) node{};
			\node (A3) at (2,0) {};
			\draw(A1) -- (A2);
			\draw(A2) -- (A3);
			\draw[color=red](A1) -- (A2) node[above, left = 6pt,fill=white] {\huge{$\mb{e_1}$}};
			\draw[color=YellowGreen](A2) -- (A3) node[above=12pt, fill=white] {\huge{$\mb{e_2}$}};
			\node (eq) at (4,1) [fill=white] {\Huge{$\doteq$}};
			\node (B1) at (6,0) {};
			\filldraw[black] (7, 2) coordinate (B2) circle (4pt) node{};
			\node (B3) at (8,0) {};
			\draw(B1) -- (B3);
			\draw(B2) -- (B3);
			\draw[color=cyan](B1) -- (B3) node[below left = 6pt,fill=white] {\huge{$\mb{e_3}$}};
			\draw[color=YellowGreen](B2) -- (B3) node[above=10pt,fill=white] {\huge{$\mb{e_2}$}};
			\node (eq) at (10,1) [fill=white] {\Huge{$+$}};
			\node (C1) at (12,0) {};
			\filldraw[black] (13, 2) coordinate (C2) circle (4pt) node{};
			\node (C3) at (14,0) {};
			\draw[color=red](C1) -- (C2) node[left = 6pt,fill=white] {\huge{$\mb{e_1}$}};
			\node (eq) at (16,1) [fill=white] {\Huge{$-$}};
			\node (D1) at (18,0) {};
			\filldraw[black] (19, 2) coordinate (D2) circle (4pt) node{};
			\node (D3) at (20,0) {};
			\draw[color=cyan](D1) -- (D3) node[below left = 6pt,fill=white] {\huge{$\mb{e_3}$}};
			
			\node at (21,0) [fill=white] {\huge{.}};
		\end{tikzpicture}
	\end{equation}
	We use this color scheme when helpful to visually depict equations of the form \eqref{eq:dottriple}. This is illustrated in the next example.
	
	\begin{example}
		Let $G$ be the unicyclic graph on 12 vertices depicted below.
		
		\begin{center}
			\begin{tikzpicture}[auto=center,every node/.style={circle, fill=black, scale=0.45}, style=thick, scale=0.50] 
				\node (S1) at (0,0) {};
				\node (S2) at (2,0) {};
				\node (S3) at (4,0) {};
				\node (S4) at (6,0) {};
				\node (S5) at (8,0) {};
				\node (S6) at (10,0) {};
				\draw(S1) -- (S2);
				
				\foreach \x [count=\i] in {0,2,4,6,8,10} {
					\node (L\i) at (\x,1) {};
					\draw (S\i) -- (L\i);
				}
				\foreach \i in {1,...,5} {
					\draw (S\i) -- (S\the\numexpr\i+1\relax);
				}
				\draw[thick,  bend right=60] (S4) to (L2);
				\draw[thick] (L2) to (S2);
			\end{tikzpicture}
		\end{center}
		Identifying two incident edges of the graph as $e_1$ and $e_2$, we have the equation below.
		\begin{equation}\label{eq:G4ex}
			\begin{tikzpicture}[auto=center,every node/.style={circle, fill=black, scale=0.45}, style=thick, scale=0.30] 
				\node (S1) at (0,0) {};
				\node (S2) at (2,0) {};
				\node (S3) at (4,0) {};
				\node (S4) at (6,0) {};
				\node (S5) at (8,0) {};
				\node (S6) at (10,0) {};
				\draw(S1) -- (S2);
				
				\foreach \x [count=\i] in {0,2,4,6,8,10} {
					\node (L\i) at (\x,1) {};
					\draw (S\i) -- (L\i);
				}
				\foreach \i in {1,...,5} {
					\draw (S\i) -- (S\the\numexpr\i+1\relax);
				}
				\draw[thick, red, bend right=60] (S4) to (L2);
				\draw[very thick, YellowGreen] (L2) to (S2);
				\node [fill=white] at (12,0.5) {\Huge{$\doteq$}};
				\foreach \i [evaluate=\i as \j using int(2*\i+12)] in {1,...,6} {
					\node (sa\i) at (\j,0) {};
					\node (la\i) at (\j,1) {};
					\draw (sa\i) -- (la\i);
				}
				\foreach \i in {1,...,5} {
					\draw (sa\i) -- (sa\the\numexpr\i+1\relax);
				}
				\draw[very thick, YellowGreen] (sa2) to (la2);
				\draw[thick, cyan,  bend left=60] (sa4) to (sa2);
				\node [fill=white] at (26,0.5) {\Huge{$+$}};
				\foreach \i [evaluate=\i as \j using int(2*\i+26)] in {1,...,6} {
					\node (sb\i) at (\j,0) {};
					\node (lb\i) at (\j,1) {};
				}
				\foreach \i [evaluate=\i as \j using int(2*\i+26)] in {1,3,4,5,6}{
					\draw (sb\i) -- (lb\i);
				}
				\foreach \i in {1,...,5} {
					\draw (sb\i) -- (sb\the\numexpr\i+1\relax);
				}
				\draw[thick,white] (sb2) to (lb2);
				\draw[thick, red,  bend right=60] (sb4) to (lb2);
				\node [fill=white] at (40,0.5) {\Huge{$-$}};
				\foreach \i [evaluate=\i as \j using int(2*\i+40)] in {1,3,4,5,6} {
					\node (sc\i) at (\j,0) {};
					\node (lc\i) at (\j,1) {};
				}
				\node (sc2) at (44,0) {};
				\node (lc2) at (44,1) {};
				\foreach \i in {1,...,5} {
					\draw (sc\i) -- (sc\the\numexpr\i+1\relax);
				}
				\foreach \i [evaluate=\i as \j using int(2*\i+40)] in {1,3,4,5,6}{
					\draw (sc\i) -- (lc\i);
				}
				\draw[thick, cyan,  bend left=60] (sc4) to (sc2);
			\end{tikzpicture}
		\end{equation}
	\end{example}
	
	As many of the graphs that appear in our computations are \defn{caterpillars} or forests of caterpillars, we now introduce some notation to represent these compactly. Caterpillars are trees such that the induced subgraph on all non-leaf vertices is just a path. The order (number of vertices) $\ell$ of this path is referred to as the \defn{length} of the caterpillar. This path subgraph is also called the \defn{spine} of the caterpillar.
	
	Non-leaf vertices of a graph are called \defn{internal vertices}. For each internal vertex $v$, the subgraph induced by $v$ together with all its leaf neighbors is called a \defn{leaf component}. For a caterpillar $G=(V,E)$, we label the spine vertices $v_1,\dots,v_\ell$ such that $\set{v_i, v_{i+1}}\in E$ for each $1\leq i \leq \ell-1$. Taking $\alpha_i$ as the order of the leaf component of $v_i$, a caterpillar may be identified by an integer composition $\alpha=(\alpha_1,\dots,\alpha_\ell)$. This composition is an invariant of the graph, unique up to reversal of the composition; that is $\alpha$ and $\alpha^*:=(\alpha_\ell,\dots,\alpha_1)$ correspond to the same isomorphism class of caterpillar. We will write $[\alpha]:=\set{\alpha,\alpha^*}$ to denote the equivalence class consisting of a composition and its reversal. As an example, the caterpillar which appears as the middle term on the right-hand side of \eqref{eq:G4ex} corresponds to the composition class $[(2,1,2,3,2,2)]$. 
	
	When referring to a caterpillar $T$, we will sometimes write $T=[\alpha]$ to mean that $T$ is the caterpillar indexed by composition class $[\alpha]$.  For instance, a caterpillar of the form $[(2^k)]$ for some $k\geq 3$ is called a \defn{comb} graph.

	Forests of caterpillars will frequently appear in our calculations. For compositions $\alpha,\beta$, we will write $[\alpha]\sqcup[\beta]$ to refer to the (CSF of the) graph which consists of the disjoint union of the two caterpillars indexed by those composition classes.
	
 Note that the only restrictions on the parts of a composition identifying a caterpillar are that $\alpha_1,\alpha_\ell\geq 2$. To make our notation for compositions more concise, we may also write
	\[(\alpha_1,\dots,\overbrace{m,\dots,m}^{\text{$k$ times}},\dots, \alpha_\ell) =(\alpha_1,\dots,m^k,\dots,\alpha_\ell).\]
	For example, $(2,3,2,2,2,2,4,3,2)=(2,3,2^4,4,3,2)$ as compositions. This should not provoke confusion as we will perform no multiplication or exponentiation operations on the entries of our compositions.

	\section{Construction}
	\label{sec.const}
	We will denote our sequence of pairs of non-isomorphic unicyclic graphs of increasing girth as $(G_c,H_c)$ where $c\geq 4$ indicates the size of the unique cycle.  Consider first the disjoint union of two comb graphs: $[(2^{c-1})]\sqcup[(2^{c-1})]$. Identify this pair of combs as consisting of a ``left" and a ``right" comb and label the spine vertices $\{v_1^L,\dots, v_{c-1}^L\}$ and $\{v_1^R,\dots,v_{c-1}^R\}$ respectively. Similarly label the corresponding leaf vertices $(f_1^L,\dots, f_{c-1}^L)$ and $(f_1^R,\dots,f_{c-1}^R)$. For example, the graph $[(2^3)]\sqcup [(2^3)]$ is depicted as labelled below.
\begin{center}
	\begin{tikzpicture}
	[auto=center,vx/.style={circle, fill=black, minimum size=5pt, inner sep=0pt}, style=thick,  scale=0.7] 
	\foreach \i in {1,...,6} {
		\node[vx] (S\i) at (\i,0) {};
		\node[vx] (L\i) at (\i,0.5) {};
		\draw (S\i) -- (L\i);
	}
	\foreach \i in {1,2,4,5} {
		\draw (S\i) -- (S\the\numexpr\i+1\relax);
	}
	\foreach \i [evaluate=\i as \j using int(\i+3)] in {1,2,3}  {
	\node at (\i,-0.5) {$v_\i^L$};
	\node at (\j,-0.5) {$v_\i^R$};
	\node at (\i,1) {$f_\i^L$};
	\node at (\j,1) {$f_\i^R$};
	}
	\end{tikzpicture}
\end{center}

\begin{definition}
	For any $c\geq 4$, the pair of graphs $(G_c,H_c)$ on $4c-4$ vertices are constructed from the union $[(2^{c-1})]\sqcup [(2^{c-1})]$ with vertices as labeled as above, and subject to the following modifications:
	\begin{itemize}
	\item add edges $\set{v_{c-1}^L, v_1^R}$ and $\set{f_2^L,v_1^R}$ to obtain $G_c$.
	\item add edges $\set{v_{c-1}^L, f_1^R}$ and $\set{v_2^L,v_1^R}$ to obtain $H_c$.	
	\end{itemize}	
\end{definition}

The pair $(G_4,H_4)$ is depicted below.
\begin{center}
	\begin{tikzpicture}[auto=center,every node/.style={circle, fill=black, scale=0.45}, style=thick, scale=0.50] 
		\foreach \i [evaluate=\i as \j using int(2*\i)] in {1,...,6} {
			\node (S\i) at (\j,0) {};
			\node (L\i) at (\j,1) {};
			\draw (S\i) -- (L\i);
		}
		\foreach \i in {1,...,5} {
			\draw (S\i) -- (S\the\numexpr\i+1\relax);
		}
		\draw[thick,  bend right=60] (S4) to (L2);
		\draw[thick] (L2) to (S2);
		\node[fill=white] at (7,-1) {\Huge{$G_4$}};
		\foreach \i [evaluate=\i as \j using int(2*\i+14)] in {1,...,6} {
			\node (sb\i) at (\j,0) {};
			\node (lb\i) at (\j,1) {};
			\draw (sb\i) -- (lb\i);
		}
		\foreach \i in {1,2,4,5} {
			\draw (sb\i) -- (sb\the\numexpr\i+1\relax);
		}
		\draw (sb3) -- (lb4);
		\draw [bend right =50] (sb2) to (sb4);
		\node[fill=white] at (23,-1) {\Huge{$H_4$}};
	\end{tikzpicture}
\end{center}
As per our definition, $G_4$ can be constructed from $[(2^3)]\sqcup [(2^3)]$ by adding edges $\set{v_3^L, v_1^R}$ and $\set{f_2^L,v_1^R}$, while $H_4$ is constructed by adding edges $\set{v_3^L, f_1^R}$ and $\set{v_2^L,v_1^R}$. Note that these graphs are the same pair that appears in \cite[Figure 14]{BJLOPS}. The pair $(G_5, H_5)$ also appear in that reference as Figure 16. For convenience, we illustrate $(G_5,H_5)$ and $(G_6,H_6)$ below. 
		\begin{center}
		\begin{tikzpicture}[auto=center,every node/.style={circle, fill=black, scale=0.45}, style=thick, scale=0.50] 
			\foreach \i [evaluate=\i as \j using int(2*\i)] in {1,...,8} {
				\node (S\i) at (\j,0) {};
				\node (L\i) at (\j,1) {};
				\draw (S\i) -- (L\i);
			}
			\foreach \i in {1,...,7} {
				\draw (S\i) -- (S\the\numexpr\i+1\relax);
			}
			\draw[thick,  bend right=60] (S5) to (L2);
			\node[fill=white] at (9,-1) {\Huge{$G_5$}};
			\foreach \i [evaluate=\i as \j using int(2*\i+18)] in {1,...,8} {
				\node (sb\i) at (\j,0) {};
				\node (lb\i) at (\j,1) {};
				\draw (sb\i) -- (lb\i);
			}
			\foreach \i in {1,2,3,5,6,7} {
				\draw (sb\i) -- (sb\the\numexpr\i+1\relax);
			}
			\draw (sb4) -- (lb5);
			\draw [bend right =50] (sb2) to (sb5);
			\node[fill=white] at (28,-1) {\Huge{$H_5$}};
		\end{tikzpicture}
		\begin{tikzpicture}[auto=center,every node/.style={circle, fill=black, scale=0.45}, style=thick, scale=0.4] 
			\foreach \i [evaluate=\i as \j using int(2*\i)] in {1,...,10} {
				\node (S\i) at (\j,0) {};
				\node (L\i) at (\j,1) {};
				\draw (S\i) -- (L\i);
			}
			\foreach \i in {1,...,9} {
				\draw (S\i) -- (S\the\numexpr\i+1\relax);
			}
			\draw[thick,  bend right=60] (S6) to (L2);
			\node[fill=white] at (11,-1) {\Huge{$G_6$}};
			\foreach \i [evaluate=\i as \j using int(2*\i+22)] in {1,...,10} {
				\node (sb\i) at (\j,0) {};
				\node (lb\i) at (\j,1) {};
				\draw (sb\i) -- (lb\i);
			}
			\foreach \i in {1,2,3,4,6,7,8,9} {
				\draw (sb\i) -- (sb\the\numexpr\i+1\relax);
			}
			\draw (sb5) -- (lb6);
			\draw [bend right =50] (sb2) to (sb6);
			\node[fill=white] at (35,-1) {\Huge{$H_6$}};
		\end{tikzpicture}
	\end{center}
	
	\begin{lemma}
		For each $c\geq 4$, $G_c$ and $H_c$ are not isomorphic.
	\end{lemma}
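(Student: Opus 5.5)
The plan is to find an isomorphism invariant, computable directly from the definition, that takes different values on $G_c$ and $H_c$. Since both graphs are unicyclic, any isomorphism must send the unique cycle to the unique cycle. It must also preserve vertex degrees. So I will locate a vertex that is distinguished by its degree, check that it lies on the cycle, and compare the degrees of its two neighbours along the cycle.

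\emph{Step 1: identify the cycles.} In $G_c$ the cycle is
\[
v_1^R,\ f_2^L,\ v_2^L,\ v_3^L,\ \dots,\ v_{c-1}^L,\ v_1^R .
\]
In $H_c$ the cycle is
\[
v_2^L,\ v_3^L,\ \dots,\ v_{c-1}^L,\ f_1^R,\ v_1^R,\ v_2^L .
\]
Each has $c$ vertices.

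\emph{Step 2: compute degrees.} In the comb $[(2^{c-1})]$, the spine vertices have degree $3$, except the two ends, which have degree $2$. The added edges change this as follows.
\begin{itemize}
\item In $G_c$, the vertex $v_1^R$ gains two edges, so it has degree $4$. The vertex $v_{c-1}^L$ gains one, so it has degree $3$. The vertex $f_2^L$ becomes degree $2$.
\item In $H_c$, the vertex $v_2^L$ gains one edge, so it has degree $4$. The vertex $v_1^R$ gains one, so it has degree $3$. The vertex $f_1^R$ becomes degree $2$.
\end{itemize}
All other degrees are unchanged. Hence each graph has exactly one vertex of degree $4$, namely $v_1^R$ in $G_c$ and $v_2^L$ in $H_c$, and in both cases it lies on the cycle.

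\emph{Step 3: the distinguishing invariant.} I consider the multiset of degrees of the two cycle-neighbours of the unique degree-$4$ vertex.
\begin{itemize}
\item In $G_c$ these neighbours are $f_2^L$, of degree $2$, and $v_{c-1}^L$, of degree $3$.
\item In $H_c$ they are $v_3^L$ and $v_1^R$, both of degree $3$. The degree of $v_3^L$ is $3$ even when $c=4$, where $v_3^L=v_{c-1}^L$ has neighbours $v_2^L$, $f_3^L$ and $f_1^R$.
\end{itemize}
This gives $\{2,3\}\neq\{3,3\}$. An isomorphism would preserve the unique degree-$4$ vertex, the unique cycle, and degrees, so this multiset would have to agree; therefore $G_c\not\cong H_c$.

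The main obstacle is choosing the invariant carefully, since cruder ones fail. Both graphs have the same degree sequence and the same girth. The degree-$4$ vertex has a leaf neighbour in both graphs. Degree-$2$ vertices also occur off the cycle, such as $v_1^L$, and $v_{c-1}^R$ in both graphs. So the comparison must be restricted to cycle-neighbours, and the degree bookkeeping, especially the boundary case $c=4$, must be checked.
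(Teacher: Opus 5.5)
Your proof is correct and is essentially the paper's argument. The paper notes that the unique degree-$4$ cycle vertex and the unique degree-$2$ cycle vertex ($v_1^R,f_2^L$ in $G_c$; $v_2^L,f_1^R$ in $H_c$) are adjacent in $G_c$ but not in $H_c$, and your comparison of cycle-neighbour degree multisets, $\{2,3\}$ versus $\{3,3\}$, records exactly this; your explicit check of the $c=4$ case is a careful touch.
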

	\begin{proof}
		For given $c$, label the vertices of $G_c$ and $H_c$ by those given to the two combs used to construct each graph. Each unicyclic graph has a unique cycle vertex of degree 4 and a unique cycle vertex of degree 2.  These are $v_1^R$ and $f_2^L$ for $G_c$, and $v_2^L$ and $f_1^R$ for $H_c$, respectively. For $c\geq 4$, the pair of vertices are adjacent in $G_c$, but not in $H_c$.
	\end{proof}
	
	Our formula for the identical CSF of these non-isomorphic pairs will consist of (the CSFs of) two other unicyclic graphs and a number of other special caterpillar forests. We now introduce those graphs. 
	
	\begin{definition}
		Let $A_c$ and $B_c$ be the unicyclic graphs obtained from the union $[(2^{c-1})]\sqcup [(2^{c-1})]$ with vertices as labeled above subject to the following modifications:
		\begin{itemize}
			\item Add edge $\set{v_2^L, v_1^R}$ and $\set{v_c^L,v_1^R}$ to obtain $A_c$.
			\item Starting from $A_c$, delete edge $\set{v_2^L,f_2^L}$ to obtain $B_c$.
		\end{itemize}
	\end{definition}
We illustrate $A_5$ and $B_5$ below. 
	\begin{center}
	\begin{tikzpicture}[auto=center,every node/.style={circle, fill=black, scale=0.45}, style=thick, scale=0.50] 
		\foreach \i [evaluate=\i as \j using int(2*\i)] in {1,...,8} {
			\node (S\i) at (\j,0) {};
			\node (L\i) at (\j,1) {};
			\draw (S\i) -- (L\i);
		}
		\foreach \i in {1,...,7} {
			\draw (S\i) -- (S\the\numexpr\i+1\relax);
		}
		\draw[thick,  bend left=60] (S5) to (S2);
		\node[fill=white] at (10,-1) {\Huge{$A_5$}};
		\foreach \i [evaluate=\i as \j using int(2*\i+18)] in {1,...,8} {
			\node (sb\i) at (\j,0) {};
			\node (lb\i) at (\j,1) {};
			\draw (sb\i) -- (lb\i);
		}
		\foreach \i in {1,2,3,4,5,6,7} {
			\draw (sb\i) -- (sb\the\numexpr\i+1\relax);
		}
		\draw [color=white] (sb2) -- (lb2);
		\draw [bend right =50] (sb2) to (sb5);
		\node[fill=white] at (28,-1) {\Huge{$B_5$}};
	\end{tikzpicture}
	\end{center}
	The common expression for the CSFs of our pairs will involve one other type of caterpillar, and then many graphs which are the disjoint union of two caterpillars. These graphs, like $(A_c, B_c)$, come in pairs with opposite sign. We now describe this special caterpillar and the next pair that appear in the expression for $\mb{X}_{G_c}=\mb{X}_{H_c}$.
	
	\begin{definition}
		Define $T_c$ as the caterpillar on $4c-4$ vertices which corresponds to the composition class $[(2^{c-2},1,3,2^{c-2})]$. Define $S_c$ as $[(2^{c-2})]\sqcup [(4,2^{c-2})]$. Finally, define $F_c$ as $[(3)]\sqcup [(2^{c-3},3,2^{c-2})]$
	\end{definition}
	For example, $T_5$ and $S_5$ are depicted below.
\[ T_5=[(2^3,1,3,2^3)]=
    \grapheq[0.4]{	
	\foreach \i [evaluate=\i as \j using int(2*\i)] in {1,...,8} {
		\node[vx] (S\i) at (\j,0) {};
	}
	\foreach \i [evaluate=\i as \j using int(2*\i)] in {1,2,3,6,7,8}{
		\node[vx] (L\i) at (\j,1) {};
		\draw (S\i) -- (L\i);
	}
	\foreach \i in {1,...,7} {
		\draw (S\i) -- (S\the\numexpr\i+1\relax);
	}
	\node[vx] (L2) at (9,1) {};
 	\node[vx] (L5) at (11,1) {};
	\draw[thick] (S5) to (L2);
	\draw[thick] (S5) to (L5);
	\node[fill=white] at (10,-1) {$T_5$};
	} 
\]
\[ S_5=[(2^3)]\sqcup[(4,2^3)]=
\grapheq[0.4]{	
	\foreach \i [evaluate=\i as \j using int(2*\i)] in {1,...,8} {
		\node[vx] (S\i) at (\j,0) {};
	}
	\foreach \i [evaluate=\i as \j using int(2*\i)] in {1,2,3,6,7,8}{
		\node[vx] (L\i) at (\j,1) {};
		\draw (S\i) -- (L\i);
	}
	\foreach \i in {1,2,4,5,6,7} {
		\draw (S\i) -- (S\the\numexpr\i+1\relax);
	}
	\node[vx] (L2) at (9,1) {};
	\node[vx] (L5) at (11,1) {};
	\draw[thick] (S5) to (L2);
	\draw[thick] (S5) to (L5);
	\node[fill=white] at (10,-1) {$T_5$};
} 
\]
\begin{example}\label{ex:G4}
	We illustrate the calculation we will make in general for $G_c$ with $G_4$. Applying the modular relation \eqref{eq:dottriple} starting from \eqref{eq:G4ex}, we have
	\begin{align*}
		\grapheq[0.3]{
		\foreach \x [count=\i] in {0,2,4,6,8,10} {
			\node[vx] (S\i) at (\x,0) {};
			\node[vx] (L\i) at (\x,1) {};
			\draw (S\i) -- (L\i);
		}
		\foreach \i in {1,...,5} {
			\draw (S\i) -- (S\the\numexpr\i+1\relax);
		}
		\draw[bend right=60] (S4) to (L2);
		}
		&\doteq	\grapheq[0.3]{
			\foreach \x [count=\i] in {0,2,4,6,8,10} {
				\node[vx] (S\i) at (\x,0) {};
				\node[vx] (L\i) at (\x,1) {};
				\draw (S\i) -- (L\i);
			}
			\foreach \i in {1,...,5} {
				\draw (S\i) -- (S\the\numexpr\i+1\relax);
			}
			\draw[bend left=60] (S4) to (S2);
		}
		+\grapheq[0.3]{
		\foreach \x [count=\i] in {0,2,4,6,8,10} {
			\node[vx] (S\i) at (\x,0) {};
		}
		\node[vx] (L1) at (0,1) {};
		\node[vx] (L2) at (5.5,1) {};
		\node[vx] (L3) at (4,1) {};
		\node[vx] (L4) at (6.5,1) {};
		\node[vx] (L5) at (8,1) {};
		\node[vx] (L6) at (10,1) {};
		\draw (S1) -- (L1);
		\draw (S3) -- (L3);
		\draw (S4) -- (L2);		
		\draw (S4) -- (L4);
		\draw (S5) -- (L5);
		\draw (S6) -- (L6);
		\foreach \i in {1,...,5} {
			\draw (S\i) -- (S\the\numexpr\i+1\relax);
		}
		\draw[very thick,  YellowGreen] (S2) to (S3);
		\draw[very thick,  red] (S3) -- (L3);
		}
		-\grapheq[0.3]{
			\foreach \x [count=\i] in {0,2,4,6,8,10} {
				\node[vx] (S\i) at (\x,0) {};
				\node[vx] (L\i) at (\x,1) {};
				\draw (S\i) -- (L\i);
			}
			\foreach \i in {1,...,5} {
				\draw (S\i) -- (S\the\numexpr\i+1\relax);
			}
			\draw[bend left=60] (S4) to (S2);
			\draw[very thick,  white] (L2) -- (S2);
		} \\ 
		&\doteq A_4-B_4+(\grapheq[0.3]{
			\foreach \x [count=\i] in {0,2,4,6,8,10} {
				\node[vx] (S\i) at (\x,0) {};
			}
			\node[vx] (L1) at (0,1) {};
			\node[vx] (L2) at (2,1) {};
			\node[vx] (L3) at (5.5,1) {};
			\node[vx] (L4) at (6.5,1) {};
			\node[vx] (L5) at (8,1) {};
			\node[vx] (L6) at (10,1) {};
			\draw (S1) -- (L1);
			\draw (S2) -- (L2);
			\draw (S4) -- (L3);		
			\draw (S4) -- (L4);
			\draw (S5) -- (L5);
			\draw (S6) -- (L6);
			\foreach \i in {1,...,5} {
				\draw (S\i) -- (S\the\numexpr\i+1\relax);
			}
			\draw[very thick,  cyan] (S2) to (L2);
			\draw[very thick,  YellowGreen] (S2) -- (S3);
		}+
		(\grapheq[0.3]{
			\foreach \x [count=\i] in {0,2,4,6,8,10} {
				\node[vx] (S\i) at (\x,0) {};
			}
			\node[vx] (L1) at (0,1) {};
			\node[vx] (L2) at (4,1) {};
			\node[vx] (L3) at (5.5,1) {};
			\node[vx] (L4) at (6.5,1) {};
			\node[vx] (L5) at (8,1) {};
			\node[vx] (L6) at (10,1) {};
			\draw (S1) -- (L1);
			\draw (S3) -- (L2);
			\draw (S4) -- (L3);		
			\draw (S4) -- (L4);
			\draw (S5) -- (L5);
			\draw (S6) -- (L6);
			\foreach \i in {1,...,5} {
				\draw (S\i) -- (S\the\numexpr\i+1\relax);
			}
			\draw[very thick,  white] (S2) to (S3);
			\draw[very thick,  red] (S3) -- (L2);
		})
		-
		\grapheq[0.3]{
			\foreach \x [count=\i] in {0,2,4,6,8,10} {
				\node[vx] (S\i) at (\x,0) {};
			}
			\node[vx] (L1) at (0,1) {};
			\node[vx] (L2) at (2,1) {};
			\node[vx] (L3) at (5.5,1) {};
			\node[vx] (L4) at (6.5,1) {};
			\node[vx] (L5) at (8,1) {};
			\node[vx] (L6) at (10,1) {};
			\draw (S1) -- (L1);
			\draw (S2) -- (L2);
			\draw (S4) -- (L3);		
			\draw (S4) -- (L4);
			\draw (S5) -- (L5);
			\draw (S6) -- (L6);
			\foreach \i in {1,...,5} {
				\draw (S\i) -- (S\the\numexpr\i+1\relax);
			}
			\draw[very thick, cyan] (S2) to (L2);
			\draw[very thick, white] (S2) -- (S3);
		}
		)\\
		&\doteq A_4-B_4+T_4-S_4+F_4.
	\end{align*}
\end{example}

In what follows we will also use the notation $\alpha \cdot \beta$ to denote the \defn{concatenation} operation on compositions. That is, if $\alpha=(\alpha_1,\dots,\alpha_l)$ and $\beta=(\beta_1,\dots,\beta_m)$, then $\alpha\cdot \beta=(\alpha_1,\dots,\alpha_l,\beta_1,\dots,\beta_m)$. We will write $\alpha \odot \beta$ to denote the \defn{near-concatenation} operation on compositions. That is $\alpha \odot \beta=(\alpha_1,\dots,\alpha_{l-1},\alpha_l+\beta_1,\beta_2,\dots,\beta_m)$.

\section{Proof}
	\label{sec.proof}
First we present our formula for $\mb{X}_{G_c}.$ This will be easier to establish than $\mb{X}_{H_c}$ since it arises as a cancellation-free computation with straightforward application of the triple deletion relation. We start with a lemma which tells us how to ``shuffle" legs (leaves) from one side of our caterpillars to the other. 
\begin{lemma}\label{lem:shuffle1}
	For a caterpillar with composition of the form $[\alpha\cdot(2,1,2)\cdot \beta]$ for any $\alpha,\beta$, we have
	\[ [\alpha\cdot(2,1,2)\cdot \beta]\doteq [\alpha\cdot(2,2,1)\cdot\beta]+[\alpha\cdot(3)\sqcup(2)\cdot\beta]-[\alpha\cdot(2,2)\sqcup(1)\odot\beta].\]
\end{lemma}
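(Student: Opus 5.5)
This is a single application of the modular relation \eqref{eq:dottriple} to a suitable pair of incident edges. Write $T=[\alpha\cdot(2,1,2)\cdot\beta]$, and let $x,y,z$ be the three consecutive spine vertices carrying the parts $2,1,2$. So $x$ has one leaf $\ell_x$, $y$ has no leaf, and $z$ has one leaf $\ell_z$. Take the incident edges $e_1=\set{y,z}$ and $e_2=\set{z,\ell_z}$, which share the vertex $z$, so that $e_3=\set{y,\ell_z}$. Then
\[ T\doteq (T-e_1+e_3)+(T-e_2)-(T-e_1-e_2+e_3). \]

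We identify the three graphs on the right-hand side in turn.

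First, $T-e_1+e_3$. Deleting $\set{y,z}$ and attaching $\ell_z$ to $y$ disconnects the spine between $y$ and $z$. The left piece is the caterpillar $\alpha\cdot(2,2)$: here $x$ keeps its leaf and $y$ now carries the leaf $\ell_z$. The right piece starts at $z$, which has lost all its leaves, so $z$ is itself a leaf attached to the first spine vertex of $\beta$. That piece is therefore the caterpillar $(1)\odot\beta$. Hence this term is $[\alpha\cdot(2,2)\sqcup(1)\odot\beta]$.

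Second, $T-e_2$. Here $\ell_z$ becomes an isolated vertex, and $z$ becomes leafless. This term is not literally of the stated form $[\alpha\cdot(2,2,1)\cdot\beta]$.

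Third, $T-e_1-e_2+e_3$. This is the left piece $\alpha\cdot(2,2)$ together with the leafless $z$ attached to $\beta$. It too fails to match the statement directly.

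So the naive choice of edges does not reproduce the statement, and I would switch to the edges at $x$. Take $e_1=\set{x,\ell_x}$ and $e_2=\set{x,y}$, sharing $x$, with $e_3=\set{\ell_x,y}$. The relation (with the roles of the edges interchanged as needed) gives the following three graphs.

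\begin{itemize}
\item $T-e_2$. Deleting $\set{x,y}$ splits the tree into $\alpha\cdot(2)$, with $x$ keeping $\ell_x$, and, on the right, $y$ (now leafless) attached to $z$ and then $\beta$. In that right piece $y$ is a leaf of $z$, so $z$ has two leaves, giving $(3)\cdot\beta$. Read with the parts $(2)$ and $(3)$ assigned to the two sides in the orientation used by the statement, this is $[\alpha\cdot(3)\sqcup(2)\cdot\beta]$.
\item $T-e_1+e_3$. Moving $\ell_x$ from $x$ to $y$ gives the spine pattern $\dots,1,2,2,\dots$ around $x,y,z$. In the reversed orientation this is the desired $[\alpha\cdot(2,2,1)\cdot\beta]$ term, where the leaf has moved off the middle vertex.
\item $T-e_1-e_2+e_3$. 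This is the forest $[\alpha\cdot(2,2)\sqcup(1)\odot\beta]$, where the leafless vertex merges into the adjacent part.
\end{itemize}

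The main obstacle is purely bookkeeping: choosing which end of the $(2,1,2)$ block to pivot at so that the three resulting graphs match the statement exactly. This requires tracking how a vertex that has lost all its leaves becomes a leaf of its spine neighbour, which is exactly what the near-concatenation $\odot$ encodes. It also requires keeping the orientation of each caterpillar class consistent, using $[\gamma]=[\gamma^*]$ where needed. Concretely, I would fix the relation with the pivot vertex at the middle of the block, at $y$, using edges from $y$ to one neighbour and from that neighbour to its leaf. I would then verify that each of the three terms degenerates to the claimed composition, checking that the end parts remain at least $2$ so that each class is a genuine caterpillar.
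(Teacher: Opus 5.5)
Your overall plan of applying the modular relation \eqref{eq:dottriple} once, to two edges at an end of the $(2,1,2)$ block, is right. However, none of your explicit computations establishes the stated identity, and the step where you claim a match is false.

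\textbf{Second attempt.} With $e_1=\{x,\ell_x\}$ and $e_2=\{x,y\}$, the three graphs are $[\alpha\cdot(1,2,2)\cdot\beta]$, $[\alpha\cdot(2)]\sqcup[(3)\cdot\beta]$, and $[\alpha\odot(1)]\sqcup[(2,2)\cdot\beta]$. The last one is not what you wrote: the leafless vertex is now $x$, which hangs off $\alpha$. This identity is the left--right mirror of the lemma, not the lemma itself. The symmetry $[\gamma]=[\gamma^*]$ reverses a whole composition; it cannot change which of $\alpha,\beta$ receives the part $3$ or the extra leaf. For instance, $[\alpha\cdot(2)]\sqcup[(3)\cdot\beta]$ and $[\alpha\cdot(3)]\sqcup[(2)\cdot\beta]$ have component orders $\{|\alpha|+2,|\beta|+3\}$ and $\{|\alpha|+3,|\beta|+2\}$, where $|\alpha|$ is the sum of the parts. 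These differ whenever $|\alpha|\neq|\beta|$, so the termwise identification ``in the reversed orientation'' fails.

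\textbf{First attempt.} You used the correct pair of edges but with the roles swapped. You also miscomputed $T-e_1+e_3$. Since $e_2=\{z,\ell_z\}$ is kept, $\ell_z$ is adjacent to both $y$ and $z$, so this term is the connected caterpillar $[\alpha\cdot(2,1,1,1)\cdot\beta]$, not a forest. You had described it and $T-e_1-e_2+e_3$ as the same graph, which should have been a warning sign. The term you rejected there, $T-e_1-e_2+e_3=[\alpha\cdot(2,2)]\sqcup[(1)\odot\beta]$, is in fact the lemma's subtracted term.

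\textbf{Final plan.} ``Pivoting at $y$'' is not well defined. The edges $\{y,z\}$ and $\{z,\ell_z\}$ meet at $z$, and the resulting identity depends on which of them is $e_1$.

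\textbf{The fix.} What is missing is the correct role assignment, which is exactly the paper's choice: take the common vertex $z$ with $e_1=\{z,\ell_z\}$ and $e_2=\{y,z\}$, so $e_3=\{y,\ell_z\}$. Then:
\begin{itemize}
\item $T-e_1+e_3$ moves $\ell_z$ onto $y$, giving $[\alpha\cdot(2,2,1)\cdot\beta]$.
\item $T-e_2$ cuts between $y$ and $z$. Now $y$ becomes a second leaf of $x$, and the forest is $[\alpha\cdot(3)]\sqcup[(2)\cdot\beta]$.
\item $T-e_1-e_2+e_3=[\alpha\cdot(2,2)]\sqcup[(1)\odot\beta]$, since the leafless $z$ is now a leaf of the first spine vertex of $\beta$.
\end{itemize}
Alternatively, your edges-at-$x$ identity does imply the lemma, but only if you apply it to the reversed representative $[\beta^*\cdot(2,1,2)\cdot\alpha^*]$ of the same caterpillar. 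That amounts to pivoting at $z$, and it is the legitimate place to use $[\gamma]=[\gamma^*]$.
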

\begin{proof}
	Depict the caterpillar $[\alpha \cdot (2,1,2)\cdot \beta]$ as 
	\begin{center}
		\begin{tikzpicture}[auto=center,every node/.style={circle, fill=black, scale=0.45}, style=thick, scale=0.50] 
		\foreach \i [evaluate=\i as \j using int(-4+2*\i)] in {1,2,3} {
		\node (S\i) at (\j,0) {};
	}
	\node (L1) at (-2,1) {};
	\node (L3) at (2,1) {};
	\draw (L1) -- (S1);
	\draw (-3,0) -- (S1) -- (S2) -- (S3) -- (3,0);
	\draw (S3) -- (L3);
	\draw[dotted] (-4,0) -- (4,0);
		\end{tikzpicture}
	\end{center}
	where the dotted lines represent the continuation into the arbitrary compositions $\alpha$ and $\beta$ on either side. Then, applying \eqref{eq:dottriple}, we have 
	\[\grapheq[0.3]{
			\draw[dotted] (-4,0) -- (4,0);
			\foreach \i [evaluate=\i as \j using int(-4+2*\i)] in {1,2,3} {
		\node[vx] (S\i) at (\j,0) {};
	}
	\node[vx] (L1) at (-2,1) {};
	\node[vx] (L3) at (2,1) {};
	\draw (L1) -- (S1);
	\draw (-3,0) -- (S1) -- (S2) -- (S3) -- (3,0);
	\draw (S3) -- (L3);
	\draw[color=YellowGreen, very thick] (S2)--(S3);
	\draw[color=red, very thick] (S3)--(L3);
}
	\doteq \grapheq[0.3]{\foreach \i [evaluate=\i as \j using int(-4+2*\i)] in {1,2,3} {
			\node[vx] (S\i) at (\j,0) {};
		}
		\node[vx] (L1) at (-2,1) {};
		\node[vx] (L3) at (2,1) {};
		\draw (L1) -- (S1);
		\draw (-3,0) -- (S1) -- (S2) -- (S3) -- (3,0);
		\draw[dotted] (-4,0) -- (4,0);
		\draw[color=YellowGreen, very thick] (S2)--(S3);
		\draw[color=cyan, very thick] (S2)--(L3);
		} 
		+
		\grapheq[0.3]{\foreach \i [evaluate=\i as \j using int(-4+2*\i)] in {1,2,3} {
					\node[vx] (S\i) at (\j,0) {};
				}
				\node[vx] (L1) at (-2,1) {};
				\node[vx] (L3) at (2,1) {};
				\draw (L1) -- (S1);
				\draw (-3,0) -- (S1) -- (S2);
				\draw (S3) -- (3,0);
				\draw[red, very thick] (S3) -- (L3);
				\draw[dotted] (-4,0) -- (-3,0);
				\draw[dotted] (3,0) -- (4,0);
				}
				-
				\grapheq[0.3]{\foreach \i [evaluate=\i as \j using int(-4+2*\i)] in {1,2,3} {
							\node[vx] (S\i) at (\j,0) {};
						}
						\node[vx] (L1) at (-2,1) {};
						\node[vx] (L3) at (2,1) {};
						\draw (L1) -- (S1);
						\draw (-3,0) -- (S1) -- (S2);
						\draw (S3) -- (3,0);
						\draw[very thick, cyan] (S2) -- (L3);
						\draw[dotted] (-4,0) -- (-3,0);
						\draw[dotted] (3,0) -- (4,0);}
	.\]
	The claimed formula is evident from the above equation.
\end{proof}
We will also need the following later on.
\begin{lemma}\label{lem:tripleDNC}
	For a caterpillar with composition of the form $[\alpha\cdot(2,1,2)\cdot \beta]$ for any $\alpha,\beta$, we have
	\[ [\alpha\cdot(2,1,2)\cdot \beta]\doteq [\alpha\cdot(3,2)\cdot\beta]+[\alpha\cdot(2)]\sqcup[(3)\cdot\beta]-[(1)]\sqcup[\alpha\cdot(2,2)\cdot\beta].\]
\end{lemma}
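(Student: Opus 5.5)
The plan is to mirror the proof of Lemma~\ref{lem:shuffle1}: a single application of the modular relation \eqref{eq:dottriple}, this time with a different choice of the two incident edges. In the caterpillar $[\alpha\cdot(2,1,2)\cdot\beta]$, let $x,y,z$ be the three consecutive spine vertices whose leaf components have orders $2,1,2$. So $x$ carries one leaf $a$ and is joined on its left to the part encoded by $\alpha$. The vertex $y$ carries no leaf. The vertex $z$ carries one leaf $b$ and is joined on its right to the part encoded by $\beta$.

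I would apply \eqref{eq:dottriple} at the middle vertex $y$, taking $e_1=\set{z,y}$ and $e_2=\set{y,x}$, so that $e_3=\set{z,x}$. Then I identify the three resulting graphs one at a time.

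\begin{itemize}
\item $G-e_1+e_3$: the vertex $y$ is now adjacent only to $x$, so it becomes a second leaf of $x$, and $x$ is joined directly to $z$. The spine loses $y$, and the leaf component of $x$ now has order $3$. This is the caterpillar $[\alpha\cdot(3,2)\cdot\beta]$.
\item $G-e_2$: deleting $\set{x,y}$ disconnects the graph. The left piece ends at $x$ with its leaf $a$, which is $[\alpha\cdot(2)]$. In the right piece, $y$ is adjacent only to $z$, so it is a second leaf of $z$. That leaf component has order $3$, giving $[(3)\cdot\beta]$.
\item $G-e_1-e_2+e_3$: the vertex $y$ is isolated, giving $[(1)]$. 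The rest is the spine with $y$ removed and $x$ joined to $z$, which is $[\alpha\cdot(2,2)\cdot\beta]$.
\end{itemize}

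Substituting these into \eqref{eq:dottriple} gives exactly the claimed identity.

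There is no real obstacle here. The only care needed is bookkeeping. First, one must check that the new leaves attached to $x$ and to $z$ really are leaves. Second, one must confirm that the end parts of the new caterpillars are at least $2$, so the composition notation is legitimate even when $\alpha$ or $\beta$ is empty. Both hold because $x$ and $z$ already carry the leaves $a$ and $b$.
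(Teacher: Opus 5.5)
Your proposal is correct and is exactly the paper's argument. The paper also makes a single application of \eqref{eq:dottriple} at the leafless middle spine vertex, with $e_1=\set{z,y}$ and $e_2=\set{y,x}$, and your identification of the three resulting graphs matches the paper's picture term by term.
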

\begin{proof}
	 This time, apply \eqref{eq:dottriple} as 
	\[\grapheq[0.3]{
		\draw[dotted] (-4,0) -- (4,0);
		\foreach \i [evaluate=\i as \j using int(-4+2*\i)] in {1,2,3} {
			\node[vx] (S\i) at (\j,0) {};
		}
		\node[vx] (L1) at (-2,1) {};
		\node[vx] (L3) at (2,1) {};
		\draw (L1) -- (S1);
		\draw (-3,0) -- (S1) -- (S2) -- (S3) -- (3,0);
		\draw (S3) -- (L3);
		\draw[very thick, color=YellowGreen] (S1)--(S2);
		\draw[very thick, color=red] (S3)--(S2);
	}
	\doteq \grapheq[0.3]{\foreach \i [evaluate=\i as \j using int(-3+2*\i)] in {1,2} {
			\node[vx] (S\i) at (\j,0) {};
		}
		\node[vx] (L1) at (-1,1) {};
		\node[vx] (L2) at (0,1) {};
		\node[vx] (L3) at (1,1) {};
		\draw (L1) -- (S1);
		\draw (-2,0) -- (S1) -- (S2) -- (S3) -- (2,0);
		\draw[dotted] (-3,0) -- (3,0);
		\draw[very thick, color=YellowGreen] (S1)--(L2);
		\draw[very thick, color=cyan] (S2)--(S1);
		\draw (S2) -- (L3);
	}  
	+
	\grapheq[0.3]{\foreach \i [evaluate=\i as \j using int(-4+2*\i)] in {1,2,3} {
			\node[vx] (S\i) at (\j,0) {};
		}
		\node[vx] (L1) at (-2,1) {};
		\node[vx] (L3) at (2,1) {};
		\draw (L1) -- (S1);
		\draw (-3,0) -- (S1);
		\draw (S3) -- (3,0);
		\draw[very thick, red] (S3) -- (S2);
		\draw (S3) -- (L3);
		\draw[dotted] (-4,0) -- (-3,0);
		\draw[dotted] (3,0) -- (4,0);
	}
	-
	\grapheq[0.3]{
		\node[vx] (S1) at (-2,0) {};
		\node[vx] (S3) at (2,0) {};
		\node[vx] (L1) at (-2,1) {};
		\node[vx] (L2) at (0,1) {};
		\node[vx] (L3) at (2,1) {};
		\draw (L1) -- (S1);
		\draw (-3,0) -- (S1);
		\draw (S3) -- (3,0);
		\draw[very thick, cyan] (S3) -- (S1);
		\draw (S3) -- (L3);
		\draw[dotted] (-4,0) -- (-3,0);
		\draw[dotted] (3,0) -- (4,0);}
	.\]
\end{proof}
\begin{remark}\label{rk:genDNC}
	The application of the modular relation in \Cref{lem:tripleDNC} is equivalent to an application of the \emph{deletion-near-contraction} relation of \cite{AMOZ23}. The lemma further generalizes to 
	\[ [\alpha\cdot(a,1,b)\cdot \beta]\doteq [\alpha\cdot(a+1,b)\cdot\beta]+[\alpha\cdot(a)]\sqcup[(b+1)\cdot\beta]-[(1)]\sqcup[\alpha\cdot(a,b)\cdot\beta] \]
for any $a$, $b \geq 1$, though we will mostly apply it in the case $a=b=2$ indicated by the lemma.
\end{remark}
\begin{lemma}\label{lem:1tothemiddle}
	Let $c\geq 5$ and consider the caterpillar $[(2,1,2^{c-3},3,2^{c-2})]$. We have
	\[ [(2,1,2^{c-3},3,2^{c-2})]\doteq T_c-S_c+F_c+\sum_{i=1}^{c-4}([(2^i,3)]\sqcup[(2^{c-3-i},3,2^{c-2})]-[(2^{i+1})]\sqcup[(3,2^{c-4-i},3,2^{c-2})]).  \]
\end{lemma}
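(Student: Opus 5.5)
The plan is to push the lone part $1$ rightward, one position at a time, by applying \Cref{lem:shuffle1} $c-3$ times. Each application contributes two forest terms, and these forest terms are exactly the summands in the claimed formula. For $1\le j\le c-3$, let $C_j$ denote the caterpillar $[(2^{j},1,2^{c-2-j},3,2^{c-2})]$. Then $C_1$ is the caterpillar on the left-hand side of the lemma, and $C_{c-2}=[(2^{c-2},1,3,2^{c-2})]=T_c$.

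For each $j$ with $1\le j\le c-3$, write $C_j=[\alpha\cdot(2,1,2)\cdot\beta]$ with $\alpha=(2^{j-1})$ and $\beta=(2^{c-3-j},3,2^{c-2})$. The hypothesis $c\geq 5$ makes this split legitimate: the part right after the $1$ is a $2$ precisely because $j\le c-3$, and it guarantees $\beta$ is well defined with the exponent $c-3-j\ge 0$. \Cref{lem:shuffle1} then gives
\[ C_j\doteq C_{j+1}+[(2^{j-1},3)]\sqcup[(2^{c-2-j},3,2^{c-2})]-[(2^{j+1})]\sqcup[(1)\odot(2^{c-3-j},3,2^{c-2})]. \]
The near-concatenation in the last term has to be evaluated in two cases:
\begin{itemize}
\item if $j\le c-4$, it equals $(3,2^{c-4-j},3,2^{c-2})$;
\item if $j=c-3$, the first part of $\beta$ is the $3$, so it equals $(4,2^{c-2})$, and the whole term is $[(2^{c-2})]\sqcup[(4,2^{c-2})]=S_c$.
\end{itemize}
Similarly, the positive forest term for $j=1$ is $[(3)]\sqcup[(2^{c-3},3,2^{c-2})]=F_c$.

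Telescoping, $C_1\doteq C_{c-2}+\sum_{j=1}^{c-3}(\text{forest terms of step } j)$, and $C_{c-2}=T_c$. I then match the forest terms to the statement:
\begin{itemize}
\item The positive terms with $2\le j\le c-3$ become the terms $[(2^i,3)]\sqcup[(2^{c-3-i},3,2^{c-2})]$ for $1\le i\le c-4$ after reindexing $i=j-1$.
\item The negative terms with $1\le j\le c-4$ are exactly $[(2^{i+1})]\sqcup[(3,2^{c-4-i},3,2^{c-2})]$ with $i=j$.
\item The two leftover terms, the positive term at $j=1$ and the negative term at $j=c-3$, are $F_c$ and $-S_c$.
\end{itemize}
Together with $T_c$, this is precisely the right-hand side of the lemma.

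There is no genuine obstacle; the work is careful bookkeeping. The most delicate point is the boundary behaviour. At the last step $j=c-3$, one must check that \Cref{lem:shuffle1} still applies (the entry after the $1$ is still a $2$, not the $3$) and that the near-concatenation $(1)\odot\beta$ merges the $1$ into the $3$, giving the part $4$ in $S_c$. At the first step, the empty $\alpha$ must produce $F_c$. I would also handle the edge case $c=5$ explicitly: there the sum has a single term ($i=1$) and only two shuffles occur, so the index ranges still line up.
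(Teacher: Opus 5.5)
Your proposal is correct and takes the same route as the paper. You iterate \Cref{lem:shuffle1} $c-3$ times to walk the $1$ rightward: the first step yields $F_c$, the last step yields $T_c$ and $-S_c$ via $(1)\odot(3,2^{c-2})=(4,2^{c-2})$, and the intermediate forest terms reindex into the sum. Your explicit telescoping with the $C_j$ is simply a more careful version of the paper's ``continue to apply'' bookkeeping, and since it only uses $j\le c-3$ rather than $c\geq 5$, it covers $c=4$ as well.
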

\begin{proof}
Applying \Cref{lem:shuffle1} to the $(2,1,2)$ pattern in the initial caterpillar ($\alpha$ is the empty composition), we have 
\begin{align*}
	[(2,1,2^{c-3},3,2^{c-2})]&\doteq [(2,2,1,2^{c-4},3,2^{c-2})]+ [(3)]\sqcup [(2^{c-3},3,2^{c-2})]-[(2,2)]\sqcup [(3,2^{c-5},3,2^{c-2})]\\
	&=[(2,2,1,2^{c-4},3,2^{c-2})]-[(2,2)]\sqcup [(3,2^{c-5},3,2^{c-2})]+F_c
\end{align*}
Reapplying \Cref{lem:shuffle1} to the first term on the right-hand side, we have
\begin{align*}
	[(2,1,2^{c-3},3,2^{c-2})]\doteq ([(2,2,2,1,2^{c-5},3,2^{c-2})] &+ [(2,3)]\sqcup [(2^{c-4},3,2^{c-2})]- [(2,2,2)]\sqcup [(1)\odot (2^{c-5},3,2^{c-2})]) \\
	&- [(2,2)]\sqcup [(3,2^{c-5},3,2^{c-2})] +F_c.
\end{align*}
If $c=5$ then this is exactly 
\[ [(2,1,2^{2},3,2^{3})]=T_5-S_5+F_5+\sum_{i=1}^1 [(2^i,3)]\sqcup[(2^{2-i},3,2^3)] - [(2^{i+1})]\sqcup[(3,3,2^3)].\]
Otherwise, continue to apply \Cref{lem:shuffle1} to the $(2,1,2)$ pattern in the first term on the right-hand side $c-5$ more times to obtain the claimed formula, once $T_c$ finally appears at the last application.
\end{proof}

\begin{lemma}\label{lem:X_Gc}
	For $c\geq 4$, the CSF $\mb{X}_{G_c}$ can be written as 
	\[{G_c}\doteq A_c-B_c+T_c-S_c+F_c+\sum_{i=1}^{c-4}([(2^i,3)]\sqcup[(2^{c-3-i},3,2^{c-2})]-[(2^{i+1})]\sqcup[(3,2^{c-4-i},3,2^{c-2})]).
	\]
\end{lemma}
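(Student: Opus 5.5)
The plan is to repeat the computation of \Cref{ex:G4} for general $c$, applying the modular relation \eqref{eq:dottriple} twice. The first application removes the cycle, and then \Cref{lem:1tothemiddle} does the remaining work.

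\emph{Step 1.} In $G_c$, take $e_1=\set{f_2^L,v_1^R}$ (the chord) and $e_2=\set{v_2^L,f_2^L}$. These edges share the vertex $f_2^L$, so $e_3=\set{v_2^L,v_1^R}$, and \eqref{eq:dottriple} gives
\[ G_c\doteq (G_c-e_1+e_3)+(G_c-e_2)-(G_c-e_1-e_2+e_3). \]
By definition $G_c-e_1+e_3=A_c$, with the cycle now through $v_2^L$. The graph $G_c-e_1-e_2+e_3$ is $A_c$ with the leaf edge at $v_2^L$ removed, which is $B_c$. The middle term $G_c-e_2$ is a tree. In it, $f_2^L$ is a pendant vertex attached to $v_1^R$, and $v_2^L$ has lost its leaf. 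Reading along the spine $v_1^L,\dots,v_{c-1}^L,v_1^R,\dots,v_{c-1}^R$, this tree is the caterpillar $[(2,1,2^{c-3},3,2^{c-2})]$.

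\emph{Step 2.} For $c\ge5$, apply \Cref{lem:1tothemiddle} to this caterpillar. This yields exactly $T_c-S_c+F_c$ plus the stated sum, so altogether
\[ G_c\doteq A_c-B_c+T_c-S_c+F_c+\sum_{i=1}^{c-4}(\cdots). \]
For $c=4$ the sum is empty, and the claim is precisely the computation in \Cref{ex:G4}. There, one application of \Cref{lem:shuffle1} to $[(2,1,3,2,2)]$ already produces $T_4-S_4+F_4$.

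\emph{Main check.} The only real obstacle is the bookkeeping in Step 1: correctly identifying $G_c-e_2$ as the caterpillar with composition $(2,1,2^{c-3},3,2^{c-2})$, and confirming that the two unicyclic terms are $A_c$ and $B_c$ rather than merely isomorphic-looking variants. To settle this, I will read off leaf-component orders along the spine vertex by vertex.
\begin{itemize}
\item $v_1^L$ has a leaf together with its spine neighbour, giving $2$.
\item $v_2^L$ has lost $f_2^L$, giving $1$.
\item $v_3^L,\dots,v_{c-1}^L$ give $2$ each, which is $c-3$ copies.
\item $v_1^R$ carries $f_1^R$ and $f_2^L$, giving $3$.
\item $v_2^R,\dots,v_{c-1}^R$ give $2$ each, which is $c-2$ copies.
\end{itemize}
I will also note that $A_c$ is defined using $v_c^L$, which for the merged spine should be read as the edge $\set{v_{c-1}^L,v_1^R}$ joining the two combs, consistent with $G_c$.
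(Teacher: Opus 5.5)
Your proposal is correct and follows the paper's proof essentially verbatim. One application of \eqref{eq:dottriple} to $\{f_2^L,v_1^R\}$ and $\{v_2^L,f_2^L\}$ gives $A_c-B_c+[(2,1,2^{c-3},3,2^{c-2})]$, and then \Cref{lem:1tothemiddle} (for $c\geq 5$) or \Cref{ex:G4} (for $c=4$) finishes. There is one small slip: for $c=4$ that caterpillar is $[(2,1,2,3,2,2)]$, not $[(2,1,3,2,2)]$. You are also right that $v_c^L$ in the definition of $A_c$ should read $v_{c-1}^L$.
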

\begin{proof}
The $c=4$ case is computed in \Cref{ex:G4}. For $c\geq 5$, as in the example, a first application of \eqref{eq:dottriple} to the incident edges $\set{f_2^L,v_1^R}$ and $\set{v_2^L,f_2^L}$ of $G_c$ yields 
\[G_c\doteq A_c-B_c+[(2,1,2^{c-3},3,2^{c-2})].\]
Then, applying \Cref{lem:1tothemiddle} to the last term on the right-hand side completes the proof.
\end{proof}

Now we will show that $\mb{X}_{H_c}$ can be expressed identically. As with $\mb{X}_{G_c}$, the $A_c$ and $B_c$ terms are obtained quickly, though the rest of the terms will now require more steps. We begin by introducing some auxiliary graphs that appear in the calculation.

\begin{definition}
	Let $R_c$ be the graph on $4c-4$ vertices obtained from $[(2^{c-1})]\sqcup[(2^{c-1})]$ by adding edges $\set{v_2^L,v_1^R}$ and $\set{v_{c-1}^L, f_1^R}$, and deleting edge $\set{v_1^R,f_1^R}$.
\end{definition}
For example, $R_5$ is depicted below.
\begin{center}
	\begin{tikzpicture}[auto=center,every node/.style={circle, fill=black, scale=0.45}, style=thick, scale=0.50]
		\foreach \i [evaluate=\i as \j using int(2*\i)] in {1,...,8} {
			\node (S\i) at (\j,0) {};
		}
		\foreach \i [evaluate=\i as \j using int(2*\i)] in {1,2,3,6,7,8}{
			\node (L\i) at (\j,1) {};
			\draw (S\i) -- (L\i);
		}
		\foreach \i in {1,2,3,5,6,7} {
			\draw (S\i) -- (S\the\numexpr\i+1\relax);
		}
		\node (L4) at (8,1) {};
		\node (L5) at (9.5,1) {};
		\draw[thick] (S4) to (L4);
		\draw[thick] (S4) to (L5);
		\draw[thick, bend  right=50] (S2) to (S5);
		\node[fill=white] at (10,-1) {\Huge{$R_5$}};
	\end{tikzpicture}
\end{center}
\begin{lemma}\label{lem:Hc step 1}
	The CSF $\mb{X}_{H_c}$ can be expressed
	\[H_c \doteq A_c-B_c+R_c+[(1)]\sqcup[(2,1,2^{2c-4})]-[(1)]\sqcup[(2^{c-2},1,2^{c-1})].\]
\end{lemma}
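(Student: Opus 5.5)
The identity should follow from two applications of the modular relation \eqref{eq:dottriple}: one at the degree-2 cycle vertex $f_1^R$ of $H_c$, and one at $v_2^L$ in the resulting leftover unicyclic graph. Throughout, label vertices as in the construction of $H_c$ from the two combs. We read the edge $\set{v_c^L,v_1^R}$ in the definition of $A_c$ as $\set{v_{c-1}^L,v_1^R}$, since the left comb has no vertex $v_c^L$.

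\emph{Step 1.} In $H_c$ take $e_1=\set{v_{c-1}^L,f_1^R}$, $e_2=\set{f_1^R,v_1^R}$ and $e_3=\set{v_{c-1}^L,v_1^R}$. The three graphs on the right of \eqref{eq:dottriple} are as follows.
\begin{itemize}
\item $H_c-e_1+e_3$ has edges $\set{v_{c-1}^L,v_1^R}$ and $\set{v_2^L,v_1^R}$, with $f_1^R$ a leaf of $v_1^R$. This is exactly $A_c$.
\item $H_c-e_2$ is, by definition, $R_c$.
\item $Q:=H_c-e_1-e_2+e_3$ is $A_c$ with the edge $\set{v_1^R,f_1^R}$ deleted, so $f_1^R$ is isolated.
\end{itemize}
Hence $H_c\doteq A_c+R_c-Q$. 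It then remains to show
\[Q\doteq B_c+[(1)]\sqcup[(2^{c-1},1,2^{c-2})]-[(1)]\sqcup[(2,1,2^{2c-4})].\]
Note that $[(2^{c-1},1,2^{c-2})]=[(2^{c-2},1,2^{c-1})]$ by reversal. Subtracting this expression for $Q$ gives precisely the statement.

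\emph{Step 2.} In $Q$ apply \eqref{eq:dottriple} at $v_2^L$ with $e_1=\set{f_2^L,v_2^L}$, $e_2=\set{v_2^L,v_1^R}$ and $e_3=\set{f_2^L,v_1^R}$.
\begin{itemize}
\item $Q-e_1+e_3$ has the same cycle as $A_c$, with $v_2^L$ leafless, $v_1^R$ carrying one leaf ($f_2^L$), and one isolated vertex ($f_1^R$). Up to relabelling $f_1^R\leftrightarrow f_2^L$, this is $B_c$.
\item $Q-e_2$ is a tree plus the isolated $f_1^R$. Its spine is $v_1^L,\dots,v_{c-1}^L,v_1^R,v_2^R,\dots,v_{c-1}^R$, where every vertex carries one leaf except $v_1^R$. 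This gives $[(1)]\sqcup[(2^{c-1},1,2^{c-2})]$.
\item $Q-e_1-e_2+e_3$ has the same spine, but now $v_2^L$ is leafless and $v_1^R$ has leaf $f_2^L$. This gives $[(1)]\sqcup[(2,1,2^{2c-4})]$.
\end{itemize}
This yields the displayed expression for $Q$, and the lemma follows.

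The only real obstacle is bookkeeping. One must check the isomorphism $Q-e_1+e_3\cong B_c$, and read off the caterpillar compositions correctly, including which end carries the $1$ and the count of $2$'s. I would verify these on $c=5$ against the pictures of $A_5$, $B_5$ and $R_5$ before writing the general argument.
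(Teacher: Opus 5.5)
Your proposal is correct and matches the paper's proof: it makes the same two applications of the modular relation at the same edges. First at $f_1^R$ with $e_1=\set{v_{c-1}^L,f_1^R}$, $e_2=\set{f_1^R,v_1^R}$, giving $A_c+R_c-Q$; then in $Q$ at $v_2^L$ with $e_1=\set{f_2^L,v_2^L}$, $e_2=\set{v_2^L,v_1^R}$, giving $B_c$ (up to swapping the isolated vertex and the new leaf) and the two caterpillar forests you identify. Your reading of $\set{v_c^L,v_1^R}$ in the definition of $A_c$ as $\set{v_{c-1}^L,v_1^R}$ is also correct, as the picture of $A_5$ confirms.
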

\begin{proof}We compute using \eqref{eq:dottriple}, depicting $H_c$ as below.
		\begin{center}
		\begin{tikzpicture}[auto=center,every node/.style={circle, fill=black, scale=0.45}, style=thick, scale=0.50] 
			\foreach \i [evaluate=\i as \j using int(-9+2*\i)] in {1,2,4,5,6,8} {
				\node (S\i) at (\j,0) {};
			}
			\foreach \i [evaluate=\i as \j using int(-9+2*\i)] in {1,2,4,5,6,8} {
			\node (L\i) at (\j,1) {};
			\draw (S\i) -- (L\i);
		}			
			\draw  (S1) -- (-4,0);
			\draw[dotted] (-4,0) -- (-2,0);
			\draw (-2,0) -- (S4); 
			\draw (S5) -- (4,0);
			\draw (6,0) -- (S8);
			\draw (S4) -- (L5);
			\draw[dotted] (4,0) -- (6,0);
			\draw[bend right=50] (S2) to (S5);
			\node[fill=white] at (2,-1) {\Huge{$H_c$}};
		\end{tikzpicture}
	\end{center}
Hence,
\begin{align*}
\grapheq[0.25]{	\foreach \i [evaluate=\i as \j using int(-9+2*\i)] in {1,2,4,5,6,8} {
		\node[vx] (S\i) at (\j,0) {};
	}
	\foreach \i [evaluate=\i as \j using int(-9+2*\i)] in {1,2,4,5,6,8} {
		\node[vx] (L\i) at (\j,1) {};
		\draw (S\i) -- (L\i);
	}			
	\draw  (S1) -- (-4,0);
	\draw[dotted] (-4,0) -- (-2,0);
	\draw (-2,0) -- (S4); 
	\draw (S5) -- (4,0);
	\draw (6,0) -- (S8);
	\draw[red, very thick] (S4) -- (L5);
	\draw[YellowGreen, very thick] (S5) -- (L5);
	\draw[dotted] (4,0) -- (6,0);
	\draw[bend right=50] (S2) to (S5);
}&\doteq
\grapheq[0.25]{	\foreach \i [evaluate=\i as \j using int(-9+2*\i)] in {1,2,4,5,6,8} {
		\node[vx] (S\i) at (\j,0) {};
	}
	\foreach \i [evaluate=\i as \j using int(-9+2*\i)] in {1,2,4,5,6,8} {
		\node[vx] (L\i) at (\j,1) {};
		\draw (S\i) -- (L\i);
	}			
	\draw  (S1) -- (-4,0);
	\draw[dotted] (-4,0) -- (-2,0);
	\draw (-2,0) -- (S4); 
	\draw (S5) -- (4,0);
	\draw (6,0) -- (S8);
	\draw[cyan, very thick] (S4) -- (S5);
	\draw[YellowGreen, very thick] (S5) -- (L5);
	\draw[dotted] (4,0) -- (6,0);
	\draw[bend right=50] (S2) to (S5);
}+
\grapheq[0.25]{	\foreach \i [evaluate=\i as \j using int(-9+2*\i)] in {1,2,4,5,6,8} {
		\node[vx] (S\i) at (\j,0) {};
	}
	\foreach \i [evaluate=\i as \j using int(-9+2*\i)] in {1,2,4,6,8} {
		\node[vx] (L\i) at (\j,1) {};
		\draw (S\i) -- (L\i);
	}			
	\draw  (S1) -- (-4,0);
		\node[vx] (L5) at (1,1) {}; 
	\draw[dotted] (-4,0) -- (-2,0);
	\draw (-2,0) -- (S4); 
	\draw (S5) -- (4,0);
	\draw (6,0) -- (S8);
	\draw[red, very thick] (S4) -- (L5);
	\draw[dotted] (4,0) -- (6,0);
	\draw[bend right=50] (S2) to (S5);
}-
\grapheq[0.25]{	\foreach \i [evaluate=\i as \j using int(-9+2*\i)] in {1,2,4,5,6,8} {
		\node[vx] (S\i) at (\j,0) {};
	}
	\foreach \i [evaluate=\i as \j using int(-9+2*\i)] in {1,2,4,6,8} {
		\node[vx] (L\i) at (\j,1) {};
		\draw (S\i) -- (L\i);
	}			
	\node[vx] (L5) at (1,1) {}; 
	\draw  (S1) -- (-4,0);
	\draw[dotted] (-4,0) -- (-2,0);
	\draw (-2,0) -- (S4); 
	\draw (S5) -- (4,0);
	\draw (6,0) -- (S8);
	\draw[cyan, very thick] (S4) -- (S5);
	\draw[dotted] (4,0) -- (6,0);
	\draw[bend right=50] (S2) to (S5);
}\\ 
&\doteq A_c+R_c-\grapheq[0.3]{	\foreach \i [evaluate=\i as \j using int(-9+2*\i)] in {1,2,4,5,6,8} {
		\node[vx] (S\i) at (\j,0) {};
	}
	\foreach \i [evaluate=\i as \j using int(-9+2*\i)] in {1,2,4,6,8} {
		\node[vx] (L\i) at (\j,1) {};
		\draw (S\i) -- (L\i);
	}			
	\node[vx] (L5) at (1,1) {}; 
	\draw  (S1) -- (-4,0);
	\draw[dotted] (-4,0) -- (-2,0);
	\draw (-2,0) -- (S4); 
	\draw (S5) -- (4,0);
	\draw (6,0) -- (S8);
	\draw (S4) -- (S5);
	\draw[red, very thick] (L2) -- (S2);
	\draw[dotted] (4,0) -- (6,0);
	\draw[YellowGreen, bend right=50] (S2) to (S5);
}\\
&\doteq A_c+R_c-\grapheq[0.28]{	\foreach \i [evaluate=\i as \j using int(-9+2*\i)] in {1,2,4,5,6,8} {
		\node[vx] (S\i) at (\j,0) {};
	}
	\foreach \i [evaluate=\i as \j using int(-9+2*\i)] in {1,4,6,8} {
		\node[vx] (L\i) at (\j,1) {};
		\draw (S\i) -- (L\i);
	}			
	\node[vx] (L5) at (1,1) {}; 
	\node[vx] (L2) at (-5,1) {}; 
	\draw  (S1) -- (-4,0);
	\draw[dotted] (-4,0) -- (-2,0);
	\draw (-2,0) -- (S4); 
	\draw (S5) -- (4,0);
	\draw (6,0) -- (S8);
	\draw (S4) -- (S5);
	\draw[cyan, very thick] (L5) -- (S5);
	\draw[dotted] (4,0) -- (6,0);
	\draw[YellowGreen, bend right=50] (S2) to (S5);
}-(\grapheq[0.28]{	\foreach \i [evaluate=\i as \j using int(-9+2*\i)] in {1,2,4,5,6,8} {
	\node[vx] (S\i) at (\j,0) {};
}
\foreach \i [evaluate=\i as \j using int(-9+2*\i)] in {1,4,6,8} {
	\node[vx] (L\i) at (\j,1) {};
	\draw (S\i) -- (L\i);
}			
\node[vx] (L5) at (1,1) {}; 
\node[vx] (L2) at (-5,1) {}; 
\draw  (S1) -- (-4,0);
\draw[dotted] (-4,0) -- (-2,0);
\draw (-2,0) -- (S4); 
\draw (S5) -- (4,0);
\draw (6,0) -- (S8);
\draw (S4) -- (S5);
\draw[red, very thick] (L2) -- (S2);
\draw[dotted] (4,0) -- (6,0);
})
\\
& \hspace{2cm}+
\grapheq[0.28]{	\foreach \i [evaluate=\i as \j using int(-9+2*\i)] in {1,2,4,5,6,8} {
		\node[vx] (S\i) at (\j,0) {};
	}
	\foreach \i [evaluate=\i as \j using int(-9+2*\i)] in {1,4,6,8} {
		\node[vx] (L\i) at (\j,1) {};
		\draw (S\i) -- (L\i);
	}			
	\node[vx] (L5) at (1,1) {}; 
	\node[vx] (L2) at (-5,1) {}; 
	\draw  (S1) -- (-4,0);
	\draw[dotted] (-4,0) -- (-2,0);
	\draw (-2,0) -- (S4); 
	\draw (S5) -- (4,0);
	\draw (6,0) -- (S8);
	\draw (S4) -- (S5);
	\draw[cyan, very thick] (L5) -- (S5);
	\draw[dotted] (4,0) -- (6,0);
}\\
&\doteq A_c+R_c-B_c-[(1)]\sqcup[(2^{c-1},1,2^{c-2})]+[(1)]\sqcup[(2,1,2^{2c-4})],
\end{align*}	
as claimed.
\end{proof}
\begin{lemma}\label{lem:dot terms}For $c\geq 4$, 
	\[[(1)]\sqcup[(2,1,2^{2c-4})]-[(1)]\sqcup[(2^{c-2},1,2^{c-1})]\doteq \sum_{i=1}^{c-3}[(1)]\sqcup [(2^{i-1},3)]\sqcup [(2^{2c-3-i})] - [(1)]\sqcup [(2^{i+1})]\sqcup [(3,2^{2c-5-i})].
	\]
\end{lemma}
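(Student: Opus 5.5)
Since the relation \eqref{eq:dottriple} holds for any graph $G$ containing the incident edges $e_1,e_2$, it can be applied directly to a graph with an extra isolated vertex: the isolated vertex is carried unchanged through every term. Equivalently, $\mb{X}$ is multiplicative over disjoint unions, so $[(1)]\sqcup$ factors out of every term. It therefore suffices to prove
\[
[(2,1,2^{2c-4})]-[(2^{c-2},1,2^{c-1})]\doteq \sum_{i=1}^{c-3}\Big([(2^{i-1},3)]\sqcup [(2^{2c-3-i})] - [(2^{i+1})]\sqcup [(3,2^{2c-5-i})]\Big).
\]
Both caterpillars on the left have $4c-5$ vertices. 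The plan is to walk the part $1$ to the right one step at a time using \Cref{lem:shuffle1}, so that the difference telescopes.

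\textbf{The shuffle step.} For $1\le i\le c-3$, consider the caterpillar $[(2^{i},1,2^{2c-3-i})]$. Apply \Cref{lem:shuffle1} with $\alpha=(2^{i-1})$ and $\beta=(2^{2c-4-i})$. Here $\beta$ is nonempty, since $2c-4-i\ge c-1\ge 3$. Because $(1)\odot(2^{2c-4-i})=(3,2^{2c-5-i})$, the lemma gives
\[
[(2^{i},1,2^{2c-3-i})]\doteq [(2^{i+1},1,2^{2c-4-i})]+[(2^{i-1},3)]\sqcup[(2^{2c-3-i})]-[(2^{i+1})]\sqcup[(3,2^{2c-5-i})].
\]
The two forest terms are exactly the $i$-th summand of the claimed sum.

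\textbf{Telescoping.} Start with $i=1$, which is $[(2,1,2^{2c-4})]$, and apply the step successively for $i=1,\dots,c-3$. Each application replaces the current caterpillar by the one with the $1$ moved one position right, plus the $i$-th summand. After the last step the remaining caterpillar is $[(2^{c-2},1,2^{2c-3-(c-3)-1})]=[(2^{c-2},1,2^{c-1})]$. This is exactly the term subtracted on the left-hand side, so it cancels and the identity follows. When $c=4$ only one step is needed and the sum has a single term.

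\textbf{Main obstacle.} There is no real conceptual obstacle; the work is in the index bookkeeping. Three things need checking: the exponent of the final caterpillar matches $2^{c-1}$ (as computed above), the near-concatenation $(1)\odot\beta$ produces $(3,2^{2c-5-i})$, and the hypothesis of \Cref{lem:shuffle1} holds at every step. That hypothesis requires $\beta\neq\emptyset$, which holds because $2c-4-i\ge 3$ for all $i\le c-3$. It also requires the part following the $1$ to be a $2$, which is automatic since $2c-3-i\ge c\ge 4$.
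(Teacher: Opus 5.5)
Your proposal is correct and follows the same route as the paper's proof: apply \Cref{lem:shuffle1} $c-3$ times to walk the part $1$ from position $2$ to position $c-1$, collect the forest terms as the summands, and finish at $[(1)]\sqcup[(2^{c-2},1,2^{c-1})]$. Your explicit choice $\alpha=(2^{i-1})$, $\beta=(2^{2c-4-i})$ and factoring out $[(1)]$ by multiplicativity only spell out bookkeeping that the paper leaves implicit.
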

\begin{proof}
	Depict $[(1)]\sqcup[(2,1,2^{2c-4})]$ as below.
		\begin{center}
		\begin{tikzpicture}[auto=center,every node/.style={circle, fill=black, scale=0.45}, style=thick, scale=0.45] 
			\foreach \i [evaluate=\i as \j using int(-7+2*\i)] in {1,2,3,4,6} {
				\node (S\i) at (\j,0) {};
				\node (L\i) at (\j,1) {};
			}
			\foreach \i [evaluate=\i as \j using int(-7+2*\i)] in {1,3,4,6} {
				\draw (S\i) -- (L\i);
			}			
			\draw  (S1) -- (2,0);
			\draw[dotted] (2,0) -- (4,0);
			\draw (4,0) -- (S6); 
		\end{tikzpicture}
	\end{center}
 Then we compute, using \eqref{eq:dottriple} or \Cref{lem:shuffle1},
 \begin{align*}
	\grapheq[0.3]{\foreach \i [evaluate=\i as \j using int(-7+2*\i)] in {1,2,3,4,6} {
			\node[vx] (S\i) at (\j,0) {};
			\node[vx] (L\i) at (\j,1) {};
		}
		\foreach \i [evaluate=\i as \j using int(-7+2*\i)] in {1,3,4,6} {
			\draw (S\i) -- (L\i);
		}			
		\draw  (S1) -- (2,0);
		\draw[dotted] (2,0) -- (4,0);
		\draw (4,0) -- (S6);
		\draw[very thick, YellowGreen] (S2) -- (S3);
		\draw [very thick, red] (S3) -- (L3); } &\doteq \grapheq[0.3]{\foreach \i [evaluate=\i as \j using int(-7+2*\i)] in {1,2,3,4,6} {
			\node[vx] (S\i) at (\j,0) {};
			\node[vx] (L\i) at (\j,1) {};
		}
		\foreach \i [evaluate=\i as \j using int(-7+2*\i)] in {1,4,6} {
			\draw (S\i) -- (L\i);
		}			
		\draw  (S1) -- (2,0);
		\draw[dotted] (2,0) -- (4,0);
		\draw (4,0) -- (S6);
		\draw[very thick, YellowGreen] (S2) -- (S3);
		\draw [very thick, cyan] (S2) -- (L2); }
		+\grapheq[0.3]{\foreach \i [evaluate=\i as \j using int(-7+2*\i)] in {1,2,3,4,6} {
				\node[vx] (S\i) at (\j,0) {};
				\node[vx] (L\i) at (\j,1) {};
			}
			\foreach \i [evaluate=\i as \j using int(-7+2*\i)] in {1,4,6} {
				\draw (S\i) -- (L\i);
			}			
			\draw  (S1) -- (S2);
			\draw[dotted] (2,0) -- (4,0);
			\draw (S3) --(2,0);
			\draw (4,0) -- (S6);
			\draw[very thick, red] (L3) -- (S3);
			}
			- 
			\grapheq[0.3]{\foreach \i [evaluate=\i as \j using int(-7+2*\i)] in {1,2,3,4,6} {
					\node[vx] (S\i) at (\j,0) {};
					\node[vx] (L\i) at (\j,1) {};
				}
				\foreach \i [evaluate=\i as \j using int(-7+2*\i)] in {1,4,6} {
					\draw (S\i) -- (L\i);
				}			
				\draw  (S1) -- (S2);
				\draw (S3) -- (2,0);
				\draw[dotted] (2,0) -- (4,0);
				\draw (4,0) -- (S6);
				\draw [very thick, cyan] (S2) -- (L2); },\\
				[(1)]\sqcup[(2,1,2^{2c-4})]&\doteq [(1)]\sqcup [(2,2,1,2^{2c-5})]+[(1)]\sqcup[(3)]\sqcup[(2^{2c-4})]-[(1)]\sqcup[(2,2)]\sqcup[(3,2^{2c-6})].
	 \end{align*}
	 Continue to apply \Cref{lem:shuffle1} to the $(2,1,2)$ pattern of the first term on the right-hand side until the 1 is at position $c-1$. This means applying the modular relation of \Cref{lem:shuffle1} $c-3$ times to yield
	 \[[(1)]\sqcup[(2,1,2^{2c-4})]\doteq [(1)]\sqcup [(2^{c-2},1,2^{c-1})]+\sum_{i=1}^{c-3}[(1)]\sqcup[(2^{i-1},3)]\sqcup[(2^{2c-3-i})]-[(1)]\sqcup[(2^{i+1})]\sqcup[(3,2^{2c-5-i})],\]
	 as claimed.
\end{proof}
We continue with a lemma which will help us handle $\mb{X}_{R_c}$. It is similar to \Cref{lem:shuffle1} in that it lets us move leaf edges along the ``spine" of a graph that is  nearly a caterpillar. However, we use it to rewrite $\mb{X}_{R_c}$ in terms of the CSFs of only forests of caterpillars. 

To prepare, first we introduce some graph representation symbology. First, for any number $m$, let $\bar{m}=m-1$. If we wish to show that a spine vertex $v$ has $\bar{m}$ leaves attached, we may represent it as below. 
\begin{center}
	\begin{tikzpicture}[auto=center, style=thick, scale=0.6] 
	\node[ style={circle, fill=black, scale=0.45}] (A1) at (0,0) {};
	\node[style = {circle}, minimum size=6pt, inner sep=0pt, draw=black] (A2) at (0,1) {$\bar{m}$};
	\draw(A1) -- (A2);
	\draw[dashed] (-1,0) -- (1,0);
	\draw (-0.5,0) -- (0.5,0);
	\node at (-0.5,-0.3) {$v$};
\end{tikzpicture}
\end{center}
For instance, the caterpillar $[(4,3,2,1,2)]$ could be represented as below.
\begin{center}
	\begin{tikzpicture}[auto=center, style=thick, scale=0.6] 
		\node[ style={circle, fill=black, scale=0.45}] (A1) at (0,0) {};
		\node[ style={circle, fill=black, scale=0.45}] (A2) at (2,0) {};
		\node[ style={circle, fill=black, scale=0.45}] (A3) at (4,0) {};
		\node[ style={circle, fill=black, scale=0.45}] (A4) at (6,0) {};
		\node[ style={circle, fill=black, scale=0.45}] (A5) at (8,0) {};
		\node[style = {circle}, minimum size=14pt, inner sep=0pt, draw=black] (L1) at (0,1) {$\bar{4}$};
		\node[style = {circle}, minimum size=14pt, inner sep=0pt, draw=black] (L2) at (2,1) {$\bar{3}$};
		\node[style = {circle}, minimum size=14pt, inner sep=0pt, draw=black] (L3) at (4,1) {$\bar{2}$};
		\node[style = {circle}, minimum size=14pt, inner sep=0pt, draw=black] (L4) at (6,1) {$\bar{1}$};
		\node[style = {circle}, minimum size=14pt, inner sep=0pt, draw=black] (L5) at (8,1) {$\bar{2}$};
				
		\draw(A1) -- (A5);
			\foreach \i in {1,...,5} {
			\draw (A\i) -- (L\i);
		}			
	\end{tikzpicture}
\end{center}
To represent that a vertex $u$ is a neighbor to the extremal spine vertex $v_1$ of a caterpillar subgraph $[\alpha]=[(\alpha_1,\dots, \alpha_\ell)]$ (that is, $u$ is a neighbor of the spine vertex of the leaf component of size $\alpha_1$ but is not itself contained in the subgraph $[\alpha]$), or that vertex $v$ is a neighbor of $v_\ell$ (but not contained in $[\alpha]$), we will illustrate this as below.
\begin{center}
		\begin{tikzpicture}[auto=center, style=thick, scale=0.6] 
		\node[ style={circle, fill=black, scale=0.45}] (A1) at (-1,0) {};
		\node[style = {rectangle}, minimum size=10pt, inner sep=0pt, draw=black] (A2) at (0,0) {$\alpha$};
		\draw(A1) -- (A2);
		\node at (-1.3,-0.3) {$u$};
		\draw[dotted] (A1) -- (-2,0);
		\draw (A1) -- (-1.5,0);
	\end{tikzpicture}
	\hspace{3cm}
	\begin{tikzpicture}[auto=center, style=thick, scale=0.6] 
		\node[ style={circle, fill=black, scale=0.45}] (A1) at (0,0) {};
		\node[style = {rectangle}, minimum size=10pt, inner sep=0pt, draw=black] (A2) at (-1,0) {$\alpha$};
		\draw(A1) -- (A2);
		\node at (0.5,-0.3) {$v$};
		\draw[dotted] (A1) -- (1,0);
		\draw (A1) -- (0.5,0);
	\end{tikzpicture}
\end{center}

\begin{lemma} For any $a_1,a_2 \geq 1$ and any compositions $\alpha,\beta,\gamma$,
\[\grapheq[0.29]{
	\foreach \i [evaluate=\i as \j using int(-4+2*\i)] in {1, 2} {
	\node[vx] (S\i) at (\j,0) {};
	\node[lc] (L\i) at (\j,2) {$\bar{a}_{\i}$};
	\draw (S\i) to (L\i);
}
\node[cat] (S0) at (-4,0) {$\alpha$};
\node[cat] (S4) at (2,0) {$\beta$};
\node[vx]  (S5) at (4,0) {};
\node[cat] (S6) at (6,0) {$\gamma$};
\draw (S0) -- (S4);
\draw (S5) -- (S6);
\draw[very thick, YellowGreen] (S1) -- (S2);
\draw[bend right =50, red] (S1) to (S5);
}
\doteq 
\grapheq[0.29]{
	\foreach \i [evaluate=\i as \j using int(-4+2*\i)] in {1, 2} {
		\node[vx] (S\i) at (\j,0) {};
		\node[lc] (L\i) at (\j,2) {$\bar{a}_{\i}$};
		\draw (S\i) to (L\i);
	}
	\node[cat] (S0) at (-4,0) {$\alpha$};
	\node[cat] (S4) at (2,0) {$\beta$};
	\node[vx]  (S5) at (4,0) {};
	\node[cat] (S6) at (6,0) {$\gamma$};
	\draw (S0) -- (S4);
	\draw (S5) -- (S6);
	\draw[very thick, YellowGreen] (S1) -- (S2);
	\draw[bend right =50, cyan] (S2) to (S5);
}
\ +\
\grapheq[0.29]{
	\foreach \i [evaluate=\i as \j using int(-4+2*\i)] in {1, 2} {
		\node[vx] (S\i) at (\j,0) {};
		\node[lc] (L\i) at (\j,2) {$\bar{a}_{\i}$};
		\draw (S\i) to (L\i);
	}
	\node[cat] (S0) at (-4,0) {$\alpha$};
	\node[cat] (S4) at (2,0) {$\beta$};
	\node[vx]  (S5) at (4,0) {};
	\node[cat] (S6) at (6,0) {$\gamma$};
	\draw (S0) -- (S1);
	\draw (S2) -- (S4);
	\draw (S5) -- (S6);
	\draw[bend right =50,red] (S1) to (S5);
}\ -\ 
\grapheq[0.29]{
	\node[vx] (S1) at (-2,0) {};
	\node[lc] (L1) at (-2,2) {$\bar{a}_1$};
	\node[cat] (S0) at (-4,0) {$\alpha$};
	\node[cat] (S2) at (0,0) {$\beta^*$};
	\node[vx] (S4) at (2,0) {};
	\node[lc] (L4) at (2,2) {$\bar{a}_2$};
	\node[vx]  (S5) at (4,0) {};
	\node[cat] (S6) at (6,0) {$\gamma$};
	\draw (S0) -- (S1) -- (L1);
	\draw (S2) -- (S6);
	\draw (S4) -- (L4);
	\draw[very thick, cyan] (S4) -- (S5);
}.
\]
\label{lem:shuffle1b}
\end{lemma}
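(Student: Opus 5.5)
This is a single application of the modular relation \eqref{eq:dottriple}, so the plan is to identify the correct triple of vertices and edges and then read off the three resulting graphs. Call the spine vertex with $\bar a_1$ leaves $x$, the one with $\bar a_2$ leaves $y$, and the isolated-looking vertex preceding $\gamma$ $z$. On the left-hand side, $x$ is adjacent to $y$ (the green edge) and to $z$ (the red edge, drawn as a long arc). Take the center of the path to be $v=x$, with $u=z$ and $w=y$, so that $e_1=\{z,x\}$ (red), $e_2=\{x,y\}$ (green) and $e_3=\{z,y\}$ (cyan). Then \eqref{eq:dottriple} gives
\[G\doteq (G-e_1+e_3)+(G-e_2)-(G-e_1-e_2+e_3).\]

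Next I would check that the three terms match the three pictures. The graph $G-e_1+e_3$ keeps the green edge and replaces the red arc from $x$ to $z$ by the cyan arc from $y$ to $z$; this is the first term on the right. The graph $G-e_2$ deletes the green edge $\{x,y\}$ and keeps the red arc; this is the second term, where $\alpha$--$x$ is joined to $z$ and the piece $y$--$\beta$ is otherwise unchanged. The graph $G-e_1-e_2+e_3$ consists of two components. One is $\alpha$ together with $x$ and its $\bar a_1$ leaves. The other is the path $\beta$--$y$--$z$--$\gamma$, with $y$ carrying $\bar a_2$ leaves.

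The only real subtlety is the drawing of that last component. Reading it linearly, it is $\beta^*$, then $y$ (with its $\bar a_2$ leaves), then $z$, then $\gamma$. This is because $y$ was attached to the first end of $\beta$, so when we traverse from the far end of $\beta$ through $y$ to $z$, the composition $\beta$ appears reversed. This is why the picture writes $\beta^*$ and draws $y$ to the right of it, joined to $z$ by the cyan edge.

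I would also confirm that the lemma places no constraint on how $\gamma$ attaches or on whether $\beta$ is empty. Since \eqref{eq:dottriple} applies to any graph containing the path $z$--$x$--$y$, the identity holds for all $a_1,a_2\geq1$ and all compositions $\alpha,\beta,\gamma$, including empty ones, provided the pictures are interpreted literally. The main point needing care is this orientation and reversal bookkeeping, not any algebra.
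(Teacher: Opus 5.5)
Your proposal is correct and matches the paper's proof: it is a single application of the modular relation \eqref{eq:dottriple} with center $x$, $e_1$ the red edge $\{z,x\}$, $e_2$ the green edge $\{x,y\}$ and $e_3=\{z,y\}$. You also handle the reversal of $\beta$ in the last term correctly, which is the one point the paper flags.
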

\begin{proof}
	The proof is by application of \eqref{eq:triplerelation} as indicated by the colored edges of the equation. Note the reversal of $\beta$ in the last term.
\end{proof}

\begin{lemma}\label{lem:Rc} For all $c\geq 4$,
	\[R_c\doteq T_c + \sum_{i=1}^{c-3}[(2^{c-3-i},3)]\sqcup [(2^{i+1},1,2^{c-2})]-[(2^{i+1})]\sqcup [(3,2^{c-3-i},1,2^{c-2})]. \]
\end{lemma}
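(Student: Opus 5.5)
The plan is to observe first that $R_c$ is in fact a tree, and then to slide the attachment point of the right-hand comb along the left spine by repeated use of \Cref{lem:shuffle1b}.

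\emph{Structure of $R_c$.} Deleting $\set{v_1^R,f_1^R}$ makes $f_1^R$ a second leaf of $v_{c-1}^L$. The two combs are then joined only through the edge $\set{v_2^L,v_1^R}$, so $R_c$ has no cycle. Concretely, $R_c$ consists of:
\begin{itemize}
\item a ``left'' caterpillar on the spine $v_1^L,\dots,v_{c-1}^L$ with composition $(2^{c-2},3)$;
\item a ``right'' caterpillar $(1,2^{c-2})$ on the spine $v_1^R,\dots,v_{c-1}^R$, in which $v_1^R$ now has no leaves;
\item the single edge $\set{v_2^L,v_1^R}$ joining them.
\end{itemize}
This is exactly the configuration on the left-hand side of \Cref{lem:shuffle1b}. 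There the red edge is $\set{v_i^L,v_1^R}$, the spine vertices carrying $\bar a_1,\bar a_2$ are $v_i^L,v_{i+1}^L$ with $a_1=a_2=2$, and the vertex $S5$ is $v_1^R$ with $\gamma=(2^{c-2})$.

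\emph{Inductive step.} I will define $R_c^{(i)}$ to be the same tree but with the connecting edge $\set{v_i^L,v_1^R}$ instead, for $2\le i\le c-1$, so that $R_c=R_c^{(2)}$. I apply \Cref{lem:shuffle1b} with green edge $\set{v_i^L,v_{i+1}^L}$ and red edge $\set{v_i^L,v_1^R}$. This gives
\[R_c^{(i)}\doteq R_c^{(i+1)} + (\text{middle term}) - (\text{last term}).\]
In the middle term the spine edge $\set{v_i^L,v_{i+1}^L}$ is deleted. The component containing $v_1^L$ is the caterpillar $[(2^{i},1,2^{c-2})]$, and the other component is $[(2^{c-2-i},3)]$. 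In the last term, $v_1^L,\dots,v_i^L$ split off as the comb $[(2^{i})]$. The rest, with $\beta$ reversed, is the caterpillar running $v_{c-1}^L,\dots,v_{i+1}^L$ followed by $v_1^R,\dots$, i.e.\ $[(3,2^{c-2-i},1,2^{c-2})]$. Reindexing by $i\mapsto i+1$, the step from $R_c^{(i+1)}$ contributes exactly the summand
\[[(2^{c-3-i},3)]\sqcup[(2^{i+1},1,2^{c-2})]-[(2^{i+1})]\sqcup[(3,2^{c-3-i},1,2^{c-2})].\]

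\emph{Telescoping.} Performing the steps for attachment points $2,3,\dots,c-2$ ($c-3$ steps in all) gives
\[R_c\doteq R_c^{(c-1)}+\sum_{i=1}^{c-3}\Big([(2^{c-3-i},3)]\sqcup[(2^{i+1},1,2^{c-2})]-[(2^{i+1})]\sqcup[(3,2^{c-3-i},1,2^{c-2})]\Big).\]
In $R_c^{(c-1)}$ the vertex $v_1^R$ hangs off the end vertex $v_{c-1}^L$, whose leaf component has order $3$. Hence $R_c^{(c-1)}$ is the caterpillar $(2^{c-2},3,1,2^{c-2})$, which is the reversal of $(2^{c-2},1,3,2^{c-2})$, so $R_c^{(c-1)}=T_c$.

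\emph{Main obstacle.} The only delicate point is bookkeeping in each application of \Cref{lem:shuffle1b}. I must check the reversal $\beta^*$ in the last term, making sure the order-3 leaf component of $v_{c-1}^L$ ends up at the far end. I must also check the boundary cases $i=1$ and $i=c-3$, where some $2$-blocks are empty. The case $c=4$, with a single step, serves as a sanity check.
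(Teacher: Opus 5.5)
Your proof is correct and is the same argument as the paper's: repeated application of \Cref{lem:shuffle1b} slides the attachment point of $v_1^R$ from $v_2^L$ along the left spine to $v_{c-1}^L$, telescoping the single non-caterpillar term down to $T_c$, and your explicit component bookkeeping fills in what the paper calls ``straightforward to check.'' One small slip: at the last step ($i=c-2$) you actually have $a_2=3$ with $\beta$ empty rather than $a_1=a_2=2$. This changes nothing, since \Cref{lem:shuffle1b} holds for any $a_1,a_2\geq 1$ and your component formulas already give $[(3)]$ and $[(3,1,2^{c-2})]$ there.
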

\begin{proof}
	First apply \Cref{lem:shuffle1b} to edges $\set{v_2^L, v_1^R}$ and $\set{v_2^L,v_3^L}$ of $R_c$. Then apply the same lemma repeatedly to the unique term on the right-hand side which is not a forest of caterpillars until we finally reach the graph of the form below (in the notation of the lemma).
	\[		\grapheq[0.5]{\foreach \i [evaluate=\i as \j using int(-4+2*\i)] in {1, 2} {
				\node[vx] (S\i) at (\j,0) {};
				\node[lc] (L\i) at (\j,2) {$\bar{a}_{\i}$};
				\draw (S\i) to (L\i);
			}
			\node[cat] (S0) at (-4,0) {$\alpha$};
			\node[vx]  (S5) at (2,0) {};
			\node[cat] (S6) at (4,0) {$\gamma$};
			\draw (S0) -- (S2);
			\draw (S5) -- (S6);
			\draw[bend right =50] (S2) to (S5);
			}.\]
	This graph will have $\alpha=(2^{c-3})$, $a_1=2$, $a_2=3$ and $\gamma=(2^{c-2})$ so it is $T_c=[(2^{c-2},1,3,2^{c-2})]$.
	The rest of the terms are collected in the summation, as is straightforward to check.
\end{proof}
The next lemma allow us to rewrite the summation on the right-hand side in \Cref{lem:Rc} when combined with the result of \Cref{lem:dot terms}.
\begin{lemma} \label{lem:Hc step 2} For all $c\geq 4$, 
\begin{align*}\left(\sum_{i=1}^{c-3}[(2^{c-3-i},3)]\sqcup[(2^{i+1},1,2^{c-2})]-[(2^{i+1})]\sqcup [(3,2^{c-3-i},1,2^{c-2})]\right)+ [(1)]\sqcup[(2,1,2^{2c-4})]-[(1)]\sqcup[(2^{c-2},1,2^{c-1})]\\
	\doteq -S_c+F_c+\sum_{i=1}^{c-4}([(2^i,3)]\sqcup[(2^{c-3-i},3,2^{c-2})]-[(2^{i+1})]\sqcup[(3,2^{c-4-i},3,2^{c-2})]).
\end{align*}
\end{lemma}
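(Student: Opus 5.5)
The plan is to eliminate every caterpillar containing a part equal to $1$ from the left-hand side, so that everything is written in terms of caterpillar forests whose parts are all at least $2$. Then I will check that the unwanted terms cancel in pairs. I will not use \Cref{lem:shuffle1} here. The tool is the deletion-near-contraction form, \Cref{lem:tripleDNC}, together with its generalization in \Cref{rk:genDNC}, applied once to each two-component forest in the summation. For the $[(1)]\sqcup\cdots$ terms I simply invoke \Cref{lem:dot terms}.

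\textbf{Step 1.} For $1\le i\le c-3$, apply \Cref{lem:tripleDNC} to $[(2^{i+1},1,2^{c-2})]$ with $\alpha=(2^i)$ and $\beta=(2^{c-3})$. This gives
\[[(2^{i+1},1,2^{c-2})]\doteq[(2^i,3,2^{c-2})]+[(2^{i+1})]\sqcup[(3,2^{c-3})]-[(1)]\sqcup[(2^{c-1+i})].\]
Similarly, for $i\le c-4$, apply it to $[(3,2^{c-3-i},1,2^{c-2})]$ with $\alpha=(3,2^{c-4-i})$ and $\beta=(2^{c-3})$. This gives
\[[(3,2^{c-3-i},1,2^{c-2})]\doteq[(3,2^{c-4-i},3,2^{c-2})]+[(3,2^{c-3-i})]\sqcup[(3,2^{c-3})]-[(1)]\sqcup[(3,2^{2c-5-i})].\]
In the boundary case $i=c-3$ the pattern is $(3,1,2)$, so I use \Cref{rk:genDNC} with $a=3$ and $b=2$. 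Its first term is then $[(4,2^{c-2})]$. I then multiply each identity by the disjoint-union factor accompanying it in the sum: $[(2^{c-3-i},3)]$ for the first and $[(2^{i+1})]$ for the second. This is legitimate because the CSF is multiplicative over disjoint unions.

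\textbf{Step 2: match the target terms.} I reindex the first-type main terms by $j=c-3-i$. They become $[(2^j,3)]\sqcup[(2^{c-3-j},3,2^{c-2})]$ for $j=0,\dots,c-4$. The term $j=0$ is $[(3)]\sqcup[(2^{c-3},3,2^{c-2})]=F_c$, and the rest are the positive summands of the right-hand side. The second-type main terms give $-[(2^{i+1})]\sqcup[(3,2^{c-4-i},3,2^{c-2})]$ for $i\le c-4$. The boundary case $i=c-3$ gives $-[(2^{c-2})]\sqcup[(4,2^{c-2})]=-S_c$.

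\textbf{Step 3: cancel the three-component leftovers.} The term $+[(2^{c-3-i},3)]\sqcup[(2^{i+1})]\sqcup[(3,2^{c-3})]$ cancels against $-[(2^{i+1})]\sqcup[(3,2^{c-3-i})]\sqcup[(3,2^{c-3})]$, since $[(2^{c-3-i},3)]=[(3,2^{c-3-i})]$ after reversal. Next, I expand the $[(1)]$ terms by \Cref{lem:dot terms}. The term $-[(1)]\sqcup[(2^{c-3-i},3)]\sqcup[(2^{c-1+i})]$, reindexed by $k=c-2-i$, becomes $-[(1)]\sqcup[(2^{k-1},3)]\sqcup[(2^{2c-3-k})]$, which kills the positive summand of \Cref{lem:dot terms}. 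Finally, $+[(2^{i+1})]\sqcup[(1)]\sqcup[(3,2^{2c-5-i})]$ kills its negative summand. For the boundary index $i=c-3$, \Cref{rk:genDNC} yields $[(1)]\sqcup[(3,2^{c-2})]$, which agrees with the same formula.

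I expect the main obstacle to be bookkeeping rather than ideas. The indices must match exactly: after reindexing, both families in Step 3 must run over precisely $1,\dots,c-3$. The $i=c-3$ boundary must be handled correctly, and $c=4$ should be checked separately, where the summation on the right is empty. I will verify vertex counts ($4c-4$ in every term) to catch off-by-one errors.
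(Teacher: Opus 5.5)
Your proposal is correct and follows essentially the same route as the paper. Both apply \Cref{lem:tripleDNC} to both forests in each summand, using \Cref{rk:genDNC} with $a=3,b=2$ at the boundary $i=c-3$ to produce $S_c$. Both then cancel the three-component leftovers by reversing $[(2^{c-3-i},3)]$, and absorb the $[(1)]\sqcup\cdots$ terms via \Cref{lem:dot terms} after reindexing. Your substitution $j=c-3-i$ also identifies $F_c$ as the $j=0$ main term more cleanly than the paper's text, which loosely equates the unexpanded forest $[(3)]\sqcup[(2^{c-2},1,2^{c-2})]$ with $F_c$.
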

\begin{proof}
 Apply \Cref{lem:tripleDNC} to the $(2,1,2)$ patterns in both terms of the summation on the left-hand side for all $1\leq i \leq c-3$:
 \begin{align*}
[(2^{c-3-i},3)]&\sqcup[(2^{i+1},1,2^{c-2})]-{[(2^{i+1})]\sqcup [(3,2^{c-3-i},1,2^{c-2})]}\\ & \doteq\left([(2^{c-3-i},3)]\sqcup[(2^{i},3,2^{c-2})]-\textcolor{blue}{[(2^{i+1})]\sqcup [(3,2^{c-4-i},3,2^{c-2})]}
\right)\\
 &\quad + \left([(2^{c-3-i},3)]\sqcup[(2^{i+1})]\sqcup[(3,2^{c-3})]-[(2^{i+1})]\sqcup[(3,2^{c-3-i})]\sqcup[(3,2^{c-3})]\right)\\
 &\qquad - \left([(1)]\sqcup[(2^{c-3-i},3)]\sqcup[(2^{i+1},2^{c-2})]-[(1)]\sqcup[(2^{i+1})]\sqcup[(3,2^{c-3-i},2^{c-2})]\right),
 \end{align*}
 which is valid except for if $i=c-3$. In this case, the applying the version of \Cref{lem:tripleDNC} indicated in \Cref{rk:genDNC} to the $(3,1,2)$ pattern in $[(2^{c-2})]\sqcup [(3,1,2^{c-2})]$, the term in blue is instead $[(2^{c-2})]\sqcup [(4,2^{c-2})]=S_c$, while also
 $$(2^{c-3-i},3)]\sqcup[(2^{i+1},1,2^{c-2})]=[(3)]\sqcup[(2^{c-2},1,2^{c-2})]=F_c.$$
Noting also that the terms on the second line of the right-hand side cancel, we have
\begin{align}
\sum_{i=1}^{c-3} [(2^{c-3-i},3)]&\sqcup[(2^{i+1},1,2^{c-2})]-[(2^{i+1})]\sqcup [(3,2^{c-3-i},1,2^{c-2})]  \nonumber \\
&\doteq F_c-S_c+\sum_{i=1}^{c-4}[(2^{c-3-i},3)]\sqcup[(2^{i},3,2^{c-2})]-[(2^{i+1})]\sqcup [(3,2^{c-4-i},3,2^{c-2})] \nonumber \\
&\qquad \quad -\sum_{i=1}^{c-3} \left([(1)]\sqcup[(2^{c-3-i},3)]\sqcup[(2^{i+1},2^{c-2})]-[(1)]\sqcup[(2^{i+1})]\sqcup[(3,2^{2c-5-i})]\right) \label{eq:reindexer}
\end{align}
Re-indexing the first terms of the summation on line \eqref{eq:reindexer} and applying \Cref{lem:dot terms}, this is 
\begin{align}
	\sum_{i=1}^{c-3} [(2^{c-3-i},3)]&\sqcup[(2^{i+1},1,2^{c-2})]-[(2^{i+1})]\sqcup [(3,2^{c-3-i},1,2^{c-2})]\nonumber   \\
	&\doteq-S_c+F_c+ \sum_{i=1}^{c-4}[(2^{c-3-i},3)]\sqcup[(2^{i},3,2^{c-2})]-[(2^{i+1})]\sqcup [(3,2^{c-4-i},3,2^{c-2})] \label{eq:reindexer2}  \\
	&\qquad -\left( [(1)]\sqcup[(2,1,2^{2c-4})]-[(1)]\sqcup[(2^{c-2},1,2^{c-1})]\right).\nonumber
\end{align}
Thus, the left-hand side of the formula of the lemma statement (after reindexing the first terms of the summation on line \eqref{eq:reindexer2}) is equal to
\begin{align*}
	-S_c+F_c+\sum_{i=1}^{c-4} \left( (2^{i},3)\sqcup(2^{c-3-i},3,2^{c-2})-(2^{i+1})\sqcup (3,2^{c-4-i},3,2^{c-2})\right)
\end{align*}
as claimed.
\end{proof}
\begin{example}
	We illustrate the computation of the previous two lemmas for $R_4$. First, applying \Cref{lem:Hc step 1} (or a single application of \cref{eq:dottriple}), we have
	\begin{align*}
		\grapheq[0.3]{
			\foreach \i [evaluate=\i as \j using int(2*\i)] in {1,...,6} {
				\node[vx] (S\i) at (\j,0) {};
			}
			\foreach \i [evaluate=\i as \j using int(2*\i)] in {1,2,3,5,6}{
				\node[vx] (L\i) at (\j,1) {};
				\draw (S\i) -- (L\i);
			}
			\foreach \i in {1,2,4,5} {
				\draw (S\i) -- (S\the\numexpr\i+1\relax);
			}
			\node[vx] (L4) at (7,1) {};
			\draw[thick] (S3) to (L4);
			\draw[very thick, YellowGreen] (S2) to (S3);
			\draw[very thick, bend  right=50, red] (S2) to (S4);
		} &\doteq \grapheq[0.3]{
			\foreach \i [evaluate=\i as \j using int(2*\i)] in {1,...,6} {
				\node[vx] (S\i) at (\j,0) {};
			}
			\foreach \i [evaluate=\i as \j using int(2*\i)] in {1,2,3,5,6}{
				\node[vx] (L\i) at (\j,1) {};
				\draw (S\i) -- (L\i);
			}
			\foreach \i in {1,2,4,5} {
				\draw (S\i) -- (S\the\numexpr\i+1\relax);
			}
			\node[vx] (L4) at (7,1) {};
			\draw[thick] (S3) to (L4);
			\draw[very thick, YellowGreen] (S2) to (S3);
			\draw[very thick, cyan] (S3) to (S4);
		}\ + \ 
		\grapheq[0.3]{
			\foreach \i [evaluate=\i as \j using int(2*\i)] in {1,...,6} {
				\node[vx] (S\i) at (\j,0) {};
			}
			\foreach \i [evaluate=\i as \j using int(2*\i)] in {1,2,3,5,6}{
				\node[vx] (L\i) at (\j,1) {};
				\draw (S\i) -- (L\i);
			}
			\foreach \i in {1,4,5} {
				\draw (S\i) -- (S\the\numexpr\i+1\relax);
			}
			\node[vx] (L4) at (7,1) {};
			\draw[thick] (S3) to (L4);
			\draw[very thick, bend  right=50, red] (S2) to (S4);
		}\ - \
		\grapheq[0.3]{
			\foreach \i [evaluate=\i as \j using int(2*\i)] in {1,...,6} {
				\node[vx] (S\i) at (\j,0) {};
			}
			\foreach \i [evaluate=\i as \j using int(2*\i)] in {1,2,3,5,6}{
				\node[vx] (L\i) at (\j,1) {};
				\draw (S\i) -- (L\i);
			}
			\foreach \i in {1,4,5} {
				\draw (S\i) -- (S\the\numexpr\i+1\relax);
			}
			\node[vx] (L4) at (7,1) {};
			\draw[thick] (S3) to (L4);
			\draw[very thick, cyan] (S3) to (S4);
		},\\
		&\doteq T_4 + [(3)]\sqcup [(2^{2},1,2^{2})]-[(2^{2})]\sqcup [(3,1,2^{2})].
	\end{align*}
Then the calculations of \Cref{lem:Hc step 2} applied to the last two terms yield
\begin{align*}
	R_4&\doteq T_4+ [(3)]\sqcup[(2,3,2^2)]-[(2^2)]\sqcup[(4,2^2)]-[(1)]\sqcup[(3)]\sqcup[(2^2,2^2)]+[(1)]\sqcup[(2^2)]\sqcup[(3,2^2)]\\
	&\doteq T_4-S_4+F_4-[(1)]\sqcup[(2,1,2^4)]+[(1)]\sqcup [(2^2,1,2^3)],
\end{align*}
where the last line uses the identity of \Cref{lem:dot terms}
\end{example}
We can now prove our formula.
\begin{theorem} \label{thm:GcHc}
	\label{thm:sameCSF}
	For any $c\geq 4$, the CSF $\mb{X}_{H_c}$ can be written as 
	\[{H_c}\doteq A_c-B_c+T_c-S_c+F_c+\sum_{i=1}^{c-4}([(2^i,3)]\sqcup[(2^{c-3-i},3,2^{c-2})]-[(2^{i+1})]\sqcup[(3,2^{c-4-i},3,2^{c-2})]).\]
	Hence, $\mb{X}_{G_c}=\mb{X}_{H_c}$.
\end{theorem}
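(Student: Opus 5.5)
The plan is to assemble the previously established lemmas. First, \Cref{lem:Hc step 1} gives
\[H_c \doteq A_c-B_c+R_c+[(1)]\sqcup[(2,1,2^{2c-4})]-[(1)]\sqcup[(2^{c-2},1,2^{c-1})].\]
Next, I would substitute for $R_c$ using \Cref{lem:Rc}. This replaces $R_c$ by $T_c$ plus the summation
\[\sum_{i=1}^{c-3}[(2^{c-3-i},3)]\sqcup [(2^{i+1},1,2^{c-2})]-[(2^{i+1})]\sqcup [(3,2^{c-3-i},1,2^{c-2})].\]
After this substitution the expression for $H_c$ reads $A_c-B_c+T_c$ plus exactly the left-hand side of \Cref{lem:Hc step 2}: that summation together with the two $[(1)]\sqcup\cdots$ terms.

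Applying \Cref{lem:Hc step 2} then replaces this entire left-hand side by
\[-S_c+F_c+\sum_{i=1}^{c-4}\left([(2^i,3)]\sqcup[(2^{c-3-i},3,2^{c-2})]-[(2^{i+1})]\sqcup[(3,2^{c-4-i},3,2^{c-2})]\right).\]
This yields the claimed formula for $H_c$. The right-hand side is literally the one in \Cref{lem:X_Gc}, and the relation $\doteq$ means equality of CSFs, so $\mb{X}_{G_c}=\mb{X}_{H_c}$ follows at once.

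The only real work is bookkeeping, and the main point needing care is the boundary case $c=4$. There the summation over $1\le i\le c-4$ is empty, and \Cref{lem:Hc step 2} uses only the $i=c-3=1$ term, which is the exceptional case handled through \Cref{rk:genDNC}. I would double-check this case against the worked example for $R_4$, confirming that $H_4\doteq A_4-B_4+T_4-S_4+F_4$, which matches \Cref{ex:G4}. The second check is that all lemmas apply for every $c\ge4$. \Cref{lem:Rc} and \Cref{lem:dot terms} are stated for $c\ge 4$, so this holds. Beyond these checks, the theorem follows by chaining the three substitutions.
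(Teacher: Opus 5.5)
Your proposal is correct and is essentially the paper's own proof. Like the paper, you substitute \Cref{lem:Rc} into \Cref{lem:Hc step 1}, recognize the left-hand side of \Cref{lem:Hc step 2}, apply that lemma, and observe that the result matches \Cref{lem:X_Gc}; your extra check of $c=4$ against the worked $R_4$ example is a sanity check only, since the chained lemmas already cover that case.
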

\begin{proof}

Substituting for $R_c$ using \Cref{lem:Rc} in the expression of \Cref{lem:Hc step 1}, we have

\begin{align*}
H_c \doteq A_c-B_c+T_c+&\left(\sum_{i=1}^{c-3}[(2^{c-3-i},3)]\sqcup[(2^{i+1},1,2^{c-2})]-[(2^{i+1})]\sqcup [(3,2^{c-3-i},1,2^{c-2})]\right)\\
&+[(1)]\sqcup[(2,1,2^{2c-4})]-[(1)]\sqcup[(2^{c-2},1,2^{c-1})].
\end{align*}
Then, \Cref{lem:Hc step 2} applies directly to terms on the right-hand side to give
\[H_c\doteq A_c-B_c+T_c-S_c+F_c+ \sum_{i=1}^{c-4}([(2^i,3)]\sqcup[(2^{c-3-i},3,2^{c-2})]-[(2^{i+1})]\sqcup[(3,2^{c-4-i},3,2^{c-2})]),\]
as claimed, matching the expression of $\mb{X}_{G_c}$ given in \Cref{lem:X_Gc}.
\end{proof}
\begin{corollary}
	There exist pairs of non-isomorphic unicyclic graphs of arbitrary girth with the same CSF. That is, the CSF does not distinguish $c$-unicyclic graphs for any $c\geq 3$.
\end{corollary}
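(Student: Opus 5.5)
The corollary is essentially a packaging of results already established; the plan is to split according to the girth $c$.

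For $c\geq 4$, I would take the pair $(G_c,H_c)$ of \Cref{sec.const}. Three properties need to be checked.

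First, each of $G_c$ and $H_c$ is connected and unicyclic with unique cycle of length exactly $c$. Both graphs have $4c-4$ vertices. The underlying forest $[(2^{c-1})]\sqcup[(2^{c-1})]$ has $4c-6$ edges and two components, and we add two edges. The first added edge joins the two combs, so the result is a tree. The second added edge lies inside that tree and closes exactly one cycle. So each graph has $4c-4$ edges, is connected, and is unicyclic.

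For the cycle length, I would trace it directly in $G_c$. The cycle is
\[v_2^L, v_3^L, \dots, v_{c-1}^L, v_1^R, f_2^L, v_2^L,\]
which has $(c-2)+2=c$ vertices. In $H_c$ the cycle is
\[v_2^L, \dots, v_{c-1}^L, f_1^R, v_1^R, v_2^L,\]
again of length $c$. This is a routine count, and checking it against the pictures for $c=4,5,6$ guards against off-by-one errors.

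Second, $G_c\not\cong H_c$ is the earlier non-isomorphism lemma.

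Third, $\mb{X}_{G_c}=\mb{X}_{H_c}$ is \Cref{thm:sameCSF}. It follows by comparing the right-hand side there with the expression for $\mb{X}_{G_c}$ in \Cref{lem:X_Gc}; the two expressions are identical.

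For $c=3$, I would not construct anything new. Instead I would cite the infinite families of non-isomorphic unicyclic graphs containing a triangle with equal CSF from Orellana--Scott \cite{ORELLANA20141}, as recalled in the introduction. Combining the two cases gives a non-isomorphic pair with equal CSF for every girth $c\geq 3$, which is the claim.

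The only step with genuine content beyond citation is the cycle-length verification in the first property. I expect no real obstacle there: it is just bookkeeping of which added edge closes the cycle and how many spine vertices of the left comb it passes through.
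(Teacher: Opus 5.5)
Your proposal is correct and matches how the paper obtains the corollary: for $c\geq 4$ it combines the non-isomorphism lemma for $(G_c,H_c)$ with \Cref{thm:sameCSF}, and for $c=3$ it relies on the Orellana--Scott triangle examples recalled in the introduction. Your check that both graphs are connected with unique cycle of length exactly $c$ fills in a detail the paper only asserts in the construction, and citing a separate source for $c=3$ is genuinely needed, since the construction degenerates there ($G_3\cong H_3$).
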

\begin{corollary}
	There exist infinitely many pairs of non-isomorphic bipartite (unicyclic) graphs with the same CSF. In particular, the CSF does not distinguish bipartite graphs on any number of vertices which is divisible by 4.
\end{corollary}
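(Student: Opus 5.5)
The statement is the last corollary: for every $n$ divisible by 4 there is a pair of non-isomorphic bipartite graphs on $n$ vertices with the same CSF, and there are infinitely many such pairs. The plan is to take the pairs $(G_c,H_c)$ with $c$ even and pad them with isolated vertices where needed.

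First, infinitely many pairs. For each $c\geq 4$, the lemma in Section~\ref{sec.const} shows $G_c\not\cong H_c$, and \Cref{thm:sameCSF} gives $\mb{X}_{G_c}=\mb{X}_{H_c}$. Both graphs are unicyclic with a unique cycle of length $c$. When $c$ is even they therefore contain no odd cycle, so they are bipartite. Taking $c=4,6,8,\dots$ gives infinitely many pairwise distinct pairs, since they have different vertex counts $4c-4$.

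Second, every $n$ divisible by 4. Even $c$ only produce orders $4c-4\equiv 4 \pmod 8$, so odd $c$ or padding is needed for the other residue.
\begin{itemize}
\item If $n\equiv 4 \pmod 8$ with $n\geq 12$, take $c=(n+4)/4$. This $c$ is even and at least $4$, so $(G_c,H_c)$ works directly on $n$ vertices.
\item For the remaining $n$ divisible by 4 and at least $12$, I would pad with isolated vertices. Take $G'=G_c\sqcup K_1^{\,k}$ and $H'=H_c\sqcup K_1^{\,k}$ with $c$ even and $4c-4+k=n$; in particular $n\equiv 0 \pmod 8$ uses $k=4$.
\end{itemize}

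The padding step rests on three facts:
\begin{itemize}
\item The CSF is multiplicative over disjoint unions, so $\mb{X}_{G'}=\mb{X}_{G_c}\,p_1^{k}=\mb{X}_{H'}$.
\item Disjoint unions of bipartite graphs are bipartite.
\item $G'$ and $H'$ are non-isomorphic, because isomorphisms preserve connected components and cancelling the isolated vertices recovers $G_c\not\cong H_c$.
\end{itemize}

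The main obstacle is the phrase ``on any number of vertices'' at small $n$. If it is read as literally every multiple of 4, the cases $n=4$ and $n=8$ need separate treatment. For $n\le 8$ I expect no nontrivial pair exists, so I would read the claim as holding for all sufficiently large multiples of 4. I would state the corollary in that form, $n\geq 12$, which the padding argument covers completely.
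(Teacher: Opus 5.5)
Your argument for the first sentence is the paper's own implicit argument. For even $c$ the graphs $G_c,H_c$ are $c$-unicyclic, hence bipartite. They are non-isomorphic by the lemma in Section~\ref{sec.const} and have equal CSF by \Cref{thm:sameCSF}. The distinct orders $4c-4$ make the pairs distinct.

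For the second sentence the paper offers nothing beyond this construction. You are right that the construction only reaches $n=4c-4$ with $c$ even, i.e.\ $n\equiv 4 \pmod 8$ and $n\geq 12$. The paper's wording overstates what its examples give.

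Your padding by isolated vertices is a sound supplement that the paper does not use:
\begin{itemize}
\item the CSF is multiplicative, so $\mb{X}_{G_c\sqcup kK_1}=p_1^k\,\mb{X}_{G_c}=\mb{X}_{H_c\sqcup kK_1}$;
\item bipartiteness is preserved under disjoint union;
\item since $G_c$ and $H_c$ are connected, any isomorphism of the padded graphs restricts to one between their unique nontrivial components.
\end{itemize}
Padding costs connectivity and unicyclicity. It also makes the divisibility condition irrelevant once disconnected graphs are allowed, since any $k\geq 0$ covers every $n\geq 12$. By contrast, the paper's pairs are connected and unicyclic but exist only for $n\equiv 4\pmod 8$.

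Your restriction to $n\geq 12$ is necessary, not a weakness. At $n=4$ the literal claim is false, because the CSF separates all eleven graphs on four vertices. The coefficient of $p_{(2,1,1)}$ is $-|E|$, and within each edge count the presence or absence of $m_{(2,2)}$ and $m_{(3,1)}$ terms (that is, of stable partitions of those types) separates the remaining graphs. For $n=8$ your expectation that no pair exists is unproved, but nothing in your argument relies on it.
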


	\printbibliography

\end{document}